\documentclass{amsart}

\usepackage{etex}
\usepackage{amsmath, amssymb}
\usepackage{array}
\usepackage[frame,cmtip,arrow,matrix,line,graph,curve]{xy}
\usepackage{graphpap, color, paralist, pstricks}
\usepackage[mathscr]{eucal}
\usepackage[pdftex]{graphicx}
\usepackage[pdftex,colorlinks,backref=page,citecolor=blue]{hyperref}
\usepackage{tikz}
\usepackage{pifont}

\setlength{\oddsidemargin}{0in}
\setlength{\evensidemargin}{0in}
\setlength{\marginparwidth}{0in}
\setlength{\marginparsep}{0in}
\setlength{\marginparpush}{0in}
\setlength{\topmargin}{0in}
\setlength{\headsep}{8pt}
\setlength{\footskip}{.3in}
\setlength{\textheight}{9in}
\setlength{\textwidth}{6.5in}
\setlength{\parskip}{4pt}
\linespread{1.3}

\newtheorem{thm}{Theorem}[section]
\newtheorem{prop}[thm]{Proposition}

\theoremstyle{definition}

\theoremstyle{plain} \newtheorem{cor}[thm]{Corollary}
\theoremstyle{plain} \newtheorem{conj}[thm]{Conjecture}
\theoremstyle{plain} \newtheorem{lemma}[thm]{Lemma}
\theoremstyle{plain} \newtheorem{obs}[thm]{Observation}

\newcommand{\gO}{\Omega}

\newcommand{\beq}[1]{\begin{equation}\label{#1}}
\newcommand{\enq}[0]{\end{equation}}

\newcommand{\eps}{\epsilon}
\newcommand{\ga}[0]{\alpha }
\newcommand{\gb}[0]{\beta }
\newcommand{\gc}[0]{\gamma }

\newcommand{\gs}[0]{\sigma }

\newcommand{\mn}[0]{\medskip\noindent}
\newcommand{\nin}[0]{\noindent}

\newcommand{\sub}[0]{\subseteq}
\newcommand{\sm}[0]{\setminus}
\newcommand{\ra}[0]{\rightarrow}

\newcommand{\mis}[0]{\mbox{\rm{mis}}}

\newcommand{\supp}[0]{\mbox{\rm{supp}}}

\newcommand{\E}[0]{\mathbb E}

\begin{document}

\title{An isoperimetric inequality for the Hamming cube and some consequences}

\author{Jeff Kahn \and Jinyoung Park}
\thanks{The authors are supported by NSF Grant DMS1501962 and BSF Grant 2014290.}
\thanks{JK was supported by a Simons Fellowship.}
\email{jkahn@math.rutgers.edu, jp1324@math.rutgers.edu}
\address{Department of Mathematics, Rutgers University \\
Hill Center for the Mathematical Sciences \\
110 Frelinghuysen Rd.\\
Piscataway, NJ 08854-8019, USA}

\begin{abstract}

Our basic result, an isoperimetric inequality for Hamming cube 
$Q_n$, can be written:
\[
\int h_A^\beta d\mu \ge 2 \mu(A)(1-\mu(A)).
\]
Here $\mu$ is uniform measure on $V=\{0,1\}^n$ ($=V(Q_n)$); $\beta=\log_2(3/2)$; and,
for $S\sub V$ and $x\in V$,
\[
h_S(x) = \begin{cases} d_{V \setminus S}(x) &\mbox{ if } x \in S, \\ 0 &\mbox{ if } x \notin S \end{cases}
\]
(where $d_T(x)$ is the number of neighbors of $x$ in $T$).

This implies inequalities involving mixtures of edge and vertex boundaries, 
with related stability results, and suggests some more general possibilities.
One application, a stability result for the set of edges connecting two disjoint subsets of $V$
of size roughly $|V|/2$, is a key step in showing that the number of maximal independent 
sets in $Q_n$ is $(1+o(1))2n\exp_2[2^{n-2}]$.
This asymptotic statement, whose proof will appear separately, was the original
motivation for the present work.

\end{abstract}

\maketitle

\section{Introduction}

We write $Q_n$ for the $n$-dimensional Hamming cube and $V$ for $V(Q_n)$. 
For $T \sub V$ let $d_T(x)$ be the number of neighbors of $x$ in $T$ ($x \in V$) 
and define $h_S:V \rightarrow \mathbb N$ by 
\beq{h} h_S(x) = \begin{cases} d_{V \setminus S}(x) &\mbox{ if } x \in S, \\ 0 &\mbox{ if } x \notin S. \end{cases} \enq 
For $f:V \rightarrow \mathbb N$, a probability measure $\nu$ on $V$ and $X\sub V$, we set
\[ \int_Xf d \nu = \sum_{x \in X} f(x)\nu(x).\]
We also use $\int$ for $\int_V$.

Our main result is the following isoperimetric inequality. Throughout this paper we use 
$\beta$ for $\log_2(3/2)~ (\approx .585)$  and $\mu$ for uniform measure on $V$. 
(A few definitions are given in Section \ref{sec:def}.)

\begin{thm} \label{main:iso} For any $A \subseteq V$,
\begin{equation}\label{ineq:main} \int h_A^\beta d\mu \ge 2 \mu(A)(1-\mu(A)).\end{equation}
\end{thm}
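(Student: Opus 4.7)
The plan is to prove Theorem~\ref{main:iso} by induction on $n$. The base case $n = 1$ is immediate: direct inspection of the four subsets of $\{0,1\}$ shows that equality in \eqref{ineq:main} holds for singletons (these are the codimension-$1$ subcubes at $n=1$), and this tightness is exactly what a sharp induction needs.

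For the inductive step, write $V = V_0 \sqcup V_1$ by the last coordinate, identify each $V_i \cong \{0,1\}^{n-1}$ via the natural projection, set $A_i = A \cap V_i$ and $a_i = \mu'(A_i)$, where $\mu'$ is uniform measure on $\{0,1\}^{n-1}$; so $\mu(A) = (a_0 + a_1)/2$. Using the pointwise identity $h_A(x) = h_{A_i}(x) + \mathbf{1}[x' \notin A]$ for $x \in A_i$ (with $x'$ the sibling across the split), one gets the decomposition
\[
2^n \int h_A^\beta \, d\mu \;=\; \sum_{i \in \{0,1\}} \sum_{x \in A_i} h_{A_i}(x)^\beta \;+\; \sum_{i \in \{0,1\}} \sum_{x \in A_i \setminus A_{1-i}} \bigl[(h_{A_i}(x)+1)^\beta - h_{A_i}(x)^\beta\bigr].
\]
The inductive hypothesis applied to $A_0, A_1$ bounds the first double sum below by $2^n[a_0(1-a_0) + a_1(1-a_1)]$, while a short algebraic identity gives
\[
2 \cdot 2^n \mu(A)(1-\mu(A)) - 2^n[a_0(1-a_0)+a_1(1-a_1)] \;=\; 2^{n-1}(a_0-a_1)^2,
\]
reducing the theorem to the \emph{boost} estimate: the second double sum is at least $2^{n-1}(a_0-a_1)^2$.

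The analytic crux is the single-variable inequality
\[
(k+1)^\beta \;\ge\; 1 + \tfrac{1}{2}k^\beta \qquad (k \in \mathbb{Z}_{\ge 0}),
\]
which holds with equality exactly at $k \in \{0,1\}$ --- this is precisely what pins the exponent to $\beta = \log_2(3/2)$, since $2^\beta = 3/2$ is the equality at $k = 1$. Applied vertex-by-vertex, it gives $(k+1)^\beta - k^\beta \ge 1 - \tfrac{1}{2}k^\beta$, so the boost is at least $|A_0 \triangle A_1|$ minus half of $\sum_{x \in A_i \setminus A_{1-i}} h_{A_i}(x)^\beta$. Combining this with the inductive hypothesis --- possibly in a ``slack-aware'' refinement that tracks the excess of $\sum h_{A_i}^\beta$ over $2^n a_i(1-a_i)$ --- should deliver the required boost bound, with the equality cases being exactly the codimension-$1$ and codimension-$2$ subcubes.

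The main obstacle I expect is that the naive combination is loose: the magic inequality becomes vacuous once $k^\beta > 2$ (roughly $k \ge 4$), and the inductive bound on $\sum h_{A_i}^\beta$ says nothing \emph{a priori} about the subsum over $A_i \setminus A_{1-i}$. The argument will likely need to split into cases based on the relative sizes of $A_0 \cap A_1$, $A_0 \triangle A_1$ and $V \setminus (A_0 \cup A_1)$ inside the shared $(n-1)$-cube, and to apply the inductive hypothesis not only to $A_0, A_1$ separately but also --- via a symmetrization or auxiliary comparison --- to related sets such as $A_0 \cap A_1$ or $A_0 \cup A_1$, in order to control the cross-term $\sum_{A_0 \cap A_1}(h_{A_0}^\beta + h_{A_1}^\beta)$ that emerges when rearranging. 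Tightness at the codimension-$1$ and codimension-$2$ subcubes throughout constrains any valid argument and should guide the bookkeeping.
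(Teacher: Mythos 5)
Your overall framework --- induction on $n$, splitting by a coordinate, decomposing $\int h_A^\beta\,d\mu$ into a ``base'' term controlled by the inductive hypothesis plus a ``boost'' from vertices whose sibling across the split lies outside $A$, and the observation that tightness at codimension-$1$ and codimension-$2$ subcubes pins $\beta$ to $\log_2(3/2)$ --- matches the paper's. But the argument does not close, as you yourself flag, and the specific tool you reach for (the pointwise inequality $(k+1)^\beta \ge 1 + \tfrac{1}{2}k^\beta$, applied vertex-by-vertex) is not what carries the day. Precisely because that bound becomes vacuous for $k\ge 4$ (the per-vertex boost lower bound $1-\tfrac12 k^\beta$ goes negative), a pointwise approach cannot control the deficit without detailed knowledge of how $h_{A_i}$ distributes over $A_i\setminus A_{1-i}$, which the inductive hypothesis alone does not supply; your ``slack-aware refinement'' is the missing theorem, not a routine patch.

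The paper sidesteps this by working globally rather than pointwise, and also decomposes asymmetrically rather than symmetrically: assuming WLOG $a_0\ge a_1$, it applies induction to $A_1$ and simply \emph{discards} the boost on $A_1\setminus A_0$, reducing everything to a lower bound on $\int_{A_0} h_A^\beta\,d\mu$ alone. It then proves an exchange argument (Observation~2.2) showing one may assume $A_1\subseteq A_0$, which cleans up the combinatorial structure of $\supp(h_0)$, $A_1$ and $A_0$. The analytic input replacing your per-vertex inequality is Lemma~2.1: the map $p(x)=(x^{1/\beta}+1)^\beta$ is convex, so Jensen gives $\int_Z(h_0+1)^\beta\,d\mu' \ge (\sigma^{1/\beta}+z^{1/\beta})^\beta$ with $\sigma=\int_Z h_0^\beta\,d\mu'$ and $z=\mu'(Z)$ --- an integrated estimate that degrades gracefully instead of going negative. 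What remains is a genuine case split on whether $z\le\alpha$ or $z\le a_0-a_1$ is the binding constraint (where $\alpha=\int h_0^\beta\,d\mu'\ge 2a_0(1-a_0)$ by induction), with the harder case reducing to a two-variable inequality (Proposition~2.3) proved by boundary evaluation, a uniqueness-of-critical-point argument, and an endpoint asymptotic. These are exactly the cases and bookkeeping you anticipate needing but do not supply; they constitute the bulk of the proof and the real content of the theorem.
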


\noindent The form of Theorem \ref{main:iso} is inspired by the following inequality of Talagrand \cite{Tal93}.

\begin{thm} \label{thm:T} For any $A \subseteq V$, 
\[ \int \sqrt{h_A} d\mu \ge \sqrt 2 \mu(A)(1-\mu(A)). \]
\end{thm}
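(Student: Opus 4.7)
I would prove Theorem \ref{thm:T} by induction on $n$. The base case $n=1$ is a direct check on the four subsets of $\{0,1\}$. For the inductive step, decompose $A\subseteq \{0,1\}^n$ along the last coordinate into $A_0,A_1\subseteq\{0,1\}^{n-1}$, where $A_i=\{x':(x',i)\in A\}$. Set $a=\mu_{n-1}(A_0)$, $b=\mu_{n-1}(A_1)$, so that $p:=\mu_n(A)=(a+b)/2$. Using the variance identity
\[
p(1-p) \;=\; \tfrac12\bigl(a(1-a)+b(1-b)\bigr) \;+\; \tfrac14 (a-b)^2,
\]
the target is equivalent to
\[
2\int \sqrt{h_A}\,d\mu_n \;\ge\; \sqrt 2\bigl(a(1-a)+b(1-b)\bigr) \;+\; \tfrac{\sqrt 2}{2}(a-b)^2.
\]

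\textbf{Reduction via slicing.} For $x'\in A_0$ one has $h_A(x',0)=h_{A_0}(x')+\mathbf 1[x'\notin A_1]$, and analogously for the $x_n=1$ slice. Summing and dividing by two, this produces the decomposition
\[
2\int \sqrt{h_A}\,d\mu_n \;=\; \int\sqrt{h_{A_0}}\,d\mu_{n-1} \;+\; \int\sqrt{h_{A_1}}\,d\mu_{n-1} \;+\; \Delta_0+\Delta_1,
\]
where $\Delta_i=\int_{A_i\setminus A_{1-i}}\bigl(\sqrt{h_{A_i}+1}-\sqrt{h_{A_i}}\bigr)d\mu_{n-1}$. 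The inductive hypothesis applied to $A_0$ and $A_1$ controls the first two integrals by $\sqrt 2(a(1-a)+b(1-b))$, leaving the single task
\begin{equation}\label{eq:gap}
\Delta_0+\Delta_1 \;\ge\; \tfrac{\sqrt 2}{2}(a-b)^2.
\end{equation}

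\textbf{Main obstacle.} The crux is \eqref{eq:gap}. The pointwise identity $\sqrt{k+1}-\sqrt{k}=1/(\sqrt{k+1}+\sqrt{k})$ and Cauchy--Schwarz together yield
\[
\Delta_i \;\ge\; \frac{\mu_{n-1}(A_i\setminus A_{1-i})^2}{\int_{A_i\setminus A_{1-i}}\bigl(\sqrt{h_{A_i}+1}+\sqrt{h_{A_i}}\bigr)d\mu_{n-1}},
\]
but crudely bounding the denominator by $2\sqrt{n-1}\,\mu_{n-1}(A_i\setminus A_{1-i})$ loses a factor of $\sqrt n$ and the induction fails to close. Noting that $|a-b|\le\mu_{n-1}(A_0\triangle A_1)=:\gamma$, a second attempt combines the two fractions via the Cauchy--Schwarz inequality $x^2/y+u^2/v\ge (x+u)^2/(y+v)$, reducing the problem to bounding the aggregate denominator in terms of $\gamma$ and known inductive quantities.

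\textbf{Closing the induction.} To overcome the loss, I would aim to prove a strengthened inductive statement---for instance, tracking a weighted quantity $\int\sqrt{h_A+c\,\mathbf 1_A}\,d\mu$ for a carefully chosen constant $c$, or equivalently an auxiliary inequality that bundles $\int\sqrt{h_A}$ together with a correction term capable of absorbing the direction-$n$ edge contributions when slicing. Equivalently, one can split along the coordinate $i$ that maximizes $|\mu_{n-1}(A_0^{(i)})-\mu_{n-1}(A_1^{(i)})|$, so that by averaging one gains a factor that compensates the Cauchy--Schwarz loss. The correct strengthening is best identified by examining equality (or near-equality) cases, such as translates of coordinate subcubes and their near-perturbations; once a tensorizable form of the inequality is isolated, the one-variable analog can be verified by a direct convexity calculation in the two-point space $\{0,1\}$ and the full statement follows by the outlined induction.
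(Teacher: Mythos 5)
This is not a proof: the argument stops at an acknowledged obstacle, and moreover the intermediate goal you reduce to is false. First the fatal structural point. Your display \eqref{eq:gap}, $\Delta_0+\Delta_1\ge\tfrac{\sqrt2}{2}(a-b)^2$, is obtained by replacing $\int\sqrt{h_{A_0}}\,d\mu_{n-1}+\int\sqrt{h_{A_1}}\,d\mu_{n-1}$ by its inductive lower bound and demanding that the leftover terms cover the variance correction \emph{on their own}. But \eqref{eq:gap} fails as a standalone statement: take $A_1=\emptyset$ and $A_0$ the set of even-weight vertices of $Q_{n-1}$. Then $a=1/2$, $b=0$, so the right side is $\sqrt2/8\approx 0.177$, while $h_{A_0}\equiv n-1$ gives $\Delta_0=\tfrac12(\sqrt n-\sqrt{n-1})\le\tfrac14(n-1)^{-1/2}$ and $\Delta_1=0$, which is smaller for $n\ge 4$. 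The full inequality survives in this example only because the inductive hypothesis is wildly slack there ($\int\sqrt{h_{A_0}}\,d\mu_{n-1}=\tfrac12\sqrt{n-1}\gg\sqrt2\,a(1-a)$). So the slack in the induction must be carried into the final estimate; any correct argument has to treat $\int\sqrt{h_{A_0}}\,d\mu_{n-1}$ as a free quantity bounded below, not substitute its worst-case value before balancing against $(a-b)^2$. (This is exactly how the paper's own induction for Theorem~\ref{main:iso} is organized: the quantity $\alpha=\int h_0^\beta d\mu'$ is retained as a parameter subject to $\alpha\ge 2a_0(1-a_0)$ and $z\le\alpha$, and the case analysis exploits whichever constraint bites.)

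Second, even granting the reduction, nothing after \eqref{eq:gap} is a proof. You state explicitly that the Cauchy--Schwarz bound loses a factor of $\sqrt n$ and "the induction fails to close," and the remedies offered --- a strengthened inductive statement with an unspecified constant $c$, or splitting along a cleverly chosen coordinate --- are named but neither formulated precisely nor verified. For calibration: the paper does not prove Theorem~\ref{thm:T} at all; it is Talagrand's inequality, quoted from \cite{Tal93}, and the paper only remarks that Talagrand's proof is likewise by induction on $n$ but otherwise different from the argument given here for Theorem~\ref{main:iso}. What you have is a plausible opening move plus an honest description of where it breaks; the mathematical content needed to close the induction is absent.
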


Notice that Theorem \ref{main:iso} is tight in two ways:  it holds with equality for 
subcubes of codimensions 1 and 2, and for subcubes of codimension 2 
it does not hold for any smaller value of $\gb$.  
As far as we know the $\sqrt{2}$ in Theorem \ref{thm:T} 
could be replaced by 2 when $\mu(A)=1/2$ (but of course not in general).
The difference between 2 and $\sqrt{2}$ wouldn't have mattered in
\cite{Tal93}, but getting the right constant when $\mu(A)$ is close to 1/2
was crucial
for applications, particularly the one in \cite{misQ} (Theorem~\ref{misqn} below)
that was our original motivation---see 
the "stability" result Theorem \ref{thm:main} that is the present work's contribution to \cite{misQ}.

Before discussing applications we briefly recall a few basic notions.

\subsection{Definitions} \label{sec:def}

As usual $[n]=\{1,\ldots, n\}$, $\mathbb P$ is the set of positive integers and $x=a\pm b$ means $a-b\leq x\leq a+b$.
We use $A, B,C$ and $W$ 
for subsets of $V$ and $E$ for $E(Q_n)$. 
For $x \in V$, $x_i$ is (as usual) the $i$th coordinate of $x$, and $x^i$ is the 
vertex obtained from $x$ by flipping $x_i$.
 For any $A$,
\[A^i=\{ x^i : x \in A\},\]
the \textit{vertex-boundary} of $A$ is
\[ \partial A =\{x \notin A:x \sim y \mbox{ for some } y \in A\},\]
and the \textit{edge-boundary} of $A$ is
\[ \nabla A =\{(x,y):x \in A, y \notin A\}.\]
We also use
\[\nabla(A,B)=\{(x,y):x \in A, y \in B\},\]
\[\nabla_i A=\{(x,x^i): x \in A, x^i \notin A\},\]
\[\nabla_I A = \cup_{i\in I}\nabla A_i \quad (I \sub [n]),\]
and
\[\nabla_i(A,B)=\{(x,x^i):x \in A, x^i \in B\}.\]
 We say $C$ is \textit{a codimension $k$ subcube} if there are $I \subseteq [n]$ of size $k$
and $z \in \{0,1\}^I$ such that
\[ C=\{ x \in V : x_i=z_i \mbox{ for all } i \in I\}.\]

\subsection{First application: separating the cube}\label{Separating}

Isoperimetric inequalities
beginning with Harper \cite{Harper} (and for edge boundaries also 
Lindsey \cite{Lindsey})
give lower bounds in terms of $|A|$ on the sizes of $\partial A$ and $\nabla A$; e.g.
\beq{edge.iso}
|\nabla A|\geq |A|\log_2(2^n/|A|),
\enq 
with equality iff $A$ is a subcube.
We are interested in hybrid versions of these.  
In what follows we assume $(A,B,W)$ is a partition of $V$, with $W$ thought of as small.
The next two conjectures are a simple illustration of what
we have in mind, followed by something general.
\begin{conj}\label{FWconj}
There is a fixed $K$ such that if $\mu(A)=1/2$, then
\[ 
|\nabla(A,B)|+K \sqrt{n}~ |W| \ge 2^{n-1}.\]
\end{conj}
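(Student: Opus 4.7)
The plan is to apply Theorem~\ref{main:iso} to the enlarged set $A \cup W$ rather than to $A$ itself. The key observation is that $h_{A \cup W}(x) = d_B(x)$ for $x \in A \cup W$, so the inequality becomes a statement purely about $A$-to-$B$ edges, with $W$-contributions absorbed into the ``inside.'' Since $\mu(A \cup W) = 1/2 + |W|/2^n$, Theorem~\ref{main:iso} gives
\[
\sum_{x \in A \cup W} d_B(x)^\beta \;\geq\; 2^n \cdot 2\mu(A\cup W)\bigl(1-\mu(A\cup W)\bigr) \;=\; 2^{n-1} - 2|W|^2/2^n.
\]
Splitting the sum into $A$- and $W$-contributions, using $t^\beta \leq t$ for $t \geq 1$ in the $A$-sum (which turns it into a bound by $|\nabla(A,B)|$) and the trivial bound $d_B(x) \leq n$ in the $W$-sum, I obtain
\[
|\nabla(A,B)| + n^\beta |W| \;\geq\; 2^{n-1} - o(2^{n-1}).
\]

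This falls short of the conjecture by a factor of $n^{\beta - 1/2} \approx n^{0.085}$ in the $|W|$ coefficient. Running the identical argument with Theorem~\ref{thm:T} in place of Theorem~\ref{main:iso} recovers the desired $\sqrt{n}$-scaling---since now $\sum_{x \in W}\sqrt{d_B(x)} \leq \sqrt{n}\,|W|$ and, using $\sqrt t \leq t$ for integers $t \geq 1$, $\sum_{x \in A}\sqrt{d_B(x)} \leq |\nabla(A,B)|$---but gives only $|\nabla(A,B)| + \sqrt{n}\,|W| \geq 2^{n-1}/\sqrt{2}$, losing a factor $\sqrt 2$ in the main term due to the constant in Theorem~\ref{thm:T}.

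The main obstacle is thus recovering simultaneously the sharp constant $2$ (present in Theorem~\ref{main:iso}, absent in Theorem~\ref{thm:T}) and the sharp exponent $1/2$ (present in Theorem~\ref{thm:T}, absent in Theorem~\ref{main:iso}). Two routes present themselves: (a)~prove the strengthened Talagrand inequality with constant $2$ at $\mu(A) = 1/2$ that is speculated in the remark after Theorem~\ref{main:iso}, from which the argument above would immediately yield the conjecture with $K = O(1)$; or (b)~adapt the inductive machinery behind Theorem~\ref{main:iso} directly to the hybrid functional $|\nabla(A,B)| + K\sqrt{n}\,|W|$, exploiting $\mu(A) = 1/2$ to sharpen each inductive step. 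A third possibility, motivated by the extremal configuration in which $A$ is a Hamming ball of measure $1/2$ and $W$ is its vertex-boundary of size $\sim 2^n/\sqrt n$, is to combine Theorem~\ref{main:iso} with Harper's vertex-isoperimetric inequality, so that the factor $\sqrt n$ emerges from the Harper threshold $\binom{n}{\lfloor n/2 \rfloor} \sim 2^n/\sqrt n$.
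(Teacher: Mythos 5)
The statement you were asked to prove is Conjecture~\ref{FWconj}, which the paper does \emph{not} prove --- it is left open. What the paper proves is the weaker Corollary~\ref{cubesep}, with $n^\beta$ in place of $K\sqrt n$. Your honest conclusion that you cannot close the $n^{\beta - 1/2}$ gap with the available tools is correct, and your diagnosis of the obstruction (Theorem~\ref{main:iso} has the right constant but exponent $\beta$, Theorem~\ref{thm:T} has the right exponent but constant $\sqrt 2$) accurately captures why this is open. Your extremal example --- $A$ a Hamming ball of measure $1/2$, $W$ the middle layer of size $\sim 2^n/\sqrt n$, $|\nabla(A,B)|=0$ --- correctly shows that $\sqrt n$ is the right scaling, matching the paper's own remark in Section~\ref{2ndapp}.

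On the part you can actually prove, there is a small but instructive improvement. You apply Theorem~\ref{main:iso} to $A\cup W$, whose measure is $1/2 + \mu(W)$, which introduces the error term $2|W|^2/2^n$. The paper's route to Corollary~\ref{cubesep} (Corollary~\ref{CorKPi} with $(R,S,U)=(B,A,W)$) instead applies the inequality to $B\cup W$, whose measure is \emph{exactly} $1/2$ because $\mu(A)=1/2$. Since $h_{B\cup W}(x)=d_A(x)$ for $x\in B$, and $\int_B h_{B\cup W}^\beta\,d\mu \le \int_B h_{B\cup W}\,d\mu = 2^{-n}|\nabla(A,B)|$, while the $U$-contribution is at most $n^\beta\mu(W)$, one gets $|\nabla(A,B)| + n^\beta|W| \ge 2^{n-1}$ cleanly with no error term. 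The asymmetry --- work on the side whose union with $W$ has the prescribed measure --- is the trick you missed. It does not affect the substance: neither route improves $n^\beta$ to $\sqrt n$, and the conjecture remains open.
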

\nin
With
$ \partial (a) = \min\{|\partial A|: |A|=a\}$
and $\nabla (a)$ defined similarly,
our maximal guess in this direction is:
\begin{conj}
If $|A|=a$, then
\[~   
|\nabla(A,B)|/\nabla(a) + |W|/\partial(a)\geq 1.
\]   
\end{conj}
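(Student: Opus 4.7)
The plan is to strip away $(B,W)$ by optimization and then attack a reduced statement. Multiplying the target by $\nabla(a)\partial(a)$ and noting that for each $v\in V\setminus A$ placing $v\in B$ costs $d_A(v)\partial(a)$ while placing $v\in W$ costs $\nabla(a)$, independently across $v$, the conjecture is equivalent to the claim that, for every $A$ with $|A|=a$,
$$\sum_{v\in\partial A}\min(d_A(v),t)\ \geq\ \nabla(a),\qquad t:=\nabla(a)/\partial(a).$$
Two extremes are immediate: if $d_A(v)\leq t$ on $\partial A$ the left side is $|\nabla A|\geq\nabla(a)$, and if $d_A(v)\geq t$ on $\partial A$ it is $t|\partial A|\geq t\partial(a)=\nabla(a)$. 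In the mixed case, simply adding $|\nabla A|\geq\nabla(a)$ and $|\partial A|\geq\partial(a)$ is not enough, because these two isoperimetric inequalities are tight on \emph{different} families (subcubes for $\nabla$; simplicial initial segments for $\partial$), so one must exploit genuine interaction between them.

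My primary attack would be coordinate compression. Each shift $C_i$ preserves $|A|$ and weakly decreases both $|\partial A|$ and $|\nabla A|$, and iterating reduces $A$ to a simultaneous downset in every coordinate; for such sets the extremal structure is explicit and the sum can be verified directly. The main obstacle I expect is to show that each $C_i$ weakly decreases $\sum_{v\in\partial A}\min(d_A(v),t)$ at the \emph{fixed} value $t=\nabla(a)/\partial(a)$: compression rearranges mass within each ``butterfly'' $\{x,x^i,y,y^i\}$ and can shift degree contributions between the sub-$t$ and super-$t$ parts of $\partial A$ simultaneously, so monotonicity does not follow from either of the two classical compression-monotonicity facts and calls for a careful case analysis.

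A sanity check is however discouraging. In $Q_3$ with $a=3$ we have $\nabla(3)=5$ and $\partial(3)=4$, so $t=5/4$; for the simplicial initial segment $A=\{000,001,010\}$, $\partial A=\{011,100,101,110\}$ has degrees $(d_A(011),d_A(100),d_A(101),d_A(110))=(2,1,1,1)$, giving $\sum_{v\in\partial A}\min(d_A(v),t)=\tfrac{5}{4}+1+1+1=\tfrac{17}{4}<5=\nabla(3)$. The corresponding partition $W=\{011\}$, $B=V\setminus(A\cup W)$ yields $|\nabla(A,B)|/\nabla(a)+|W|/\partial(a)=3/5+1/4=17/20<1$, showing that the conjecture is false as stated. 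The real task is therefore first to identify a correct strengthening---for example a weighted form in which $|W|/\partial(a)$ is replaced by $\sum_{v\in W}\min(d_A(v)/\nabla(a),1/\partial(a))$, or with the $1$ on the right replaced by a smaller universal constant---after which the compression/reduction program above is the natural route to try.
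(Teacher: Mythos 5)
Your reduction of the conjecture to
\[
\sum_{v\in\partial A}\min\!\bigl(d_A(v),\,t\bigr)\;\geq\;\nabla(a),\qquad t=\nabla(a)/\partial(a),
\]
obtained by choosing, independently for each $v\in V\setminus A$, whichever of $v\in B$ or $v\in W$ is cheaper, is correct. More importantly, your sanity check is decisive: this is in fact \emph{not} a theorem, and you have produced a genuine counterexample. The paper does not prove the statement --- it is posed as a conjecture (the authors' ``maximal guess'') --- so there is no proof to compare against, but your example shows the conjecture is false exactly as written.

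Verifying your numbers: in $Q_3$ with $a=3$, a $3$-set can span at most two edges (the cube is triangle-free), so $\nabla(3)=9-2\cdot 2=5$; and every $3$-set in $Q_3$ has vertex boundary of size at least $4$ (checking the few isomorphism types confirms $\partial(3)=4$). For $A=\{000,001,010\}$ one has $\partial A=\{011,100,101,110\}$ with $d_A$-values $(2,1,1,1)$, so taking $W=\{011\}$ and $B=\{100,101,110,111\}$ gives $|\nabla(A,B)|=3$ and hence
\[
\frac{|\nabla(A,B)|}{\nabla(3)}+\frac{|W|}{\partial(3)}=\frac{3}{5}+\frac{1}{4}=\frac{17}{20}<1.
\]
The same phenomenon occurs for $a=5$ in $Q_3$; by contrast the inequality holds with equality at $a\in\{1,2,4,6,7\}$, which suggests that the conjecture might survive if one restricts to $a$ a power of two (matching the paper's intended regime $a\approx 2^{n-k}$), or with a weaker constant as you propose. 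Your discussion of why naive compression would not obviously preserve $\sum\min(d_A(v),t)$ for fixed $t$ is also sound, though it is now secondary to the counterexample. The useful next step, if you want to salvage something, is to determine whether Conjecture~1.3 (the $\mu(A)=1/2$ case) and the restriction of Conjecture~1.4 to $a=2^{n-k}$ survive; your example does not touch those.
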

\nin
Results of Margulis \cite{Margulis} and Talagrand \cite{Tal93}
(motivated by \cite{Margulis}) imply
tradeoffs between $|\nabla A|$ and $|\partial A|$, but don't seem to help here.
Theorem \ref{main:iso} implies a weaker version of Conjecture~\ref{FWconj}:
\begin{cor}\label{cubesep}
For $A,B,W$ as in Conjecture \ref{FWconj},
$~ |\nabla(A,B)|+n^\gb |W| \ge 2^{n-1}.$
\end{cor}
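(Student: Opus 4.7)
The plan is to apply Theorem~\ref{main:iso} not to $A$ itself but to its complement $A^c = B \cup W$. Since $(A,B,W)$ partitions $V$ and $\mu(A) = 1/2$, we have $\mu(B \cup W) = 1 - \mu(A) = 1/2$, so the right-hand side of Theorem~\ref{main:iso} equals $2 \cdot (1/2)(1/2) = 1/2$. The convenient feature of this choice is that $V \setminus (B \cup W) = A$, and consequently $h_{B \cup W}(x) = d_A(x)$ for every $x \in B \cup W$ (while $h_{B \cup W}$ vanishes on $A$). Multiplying through by $2^n$ turns the inequality into
\[
\sum_{x \in B} d_A(x)^\beta \,+\, \sum_{x \in W} d_A(x)^\beta \;\ge\; 2^{n-1}.
\]

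Next I would bound each of the two sums from above, producing $|\nabla(A,B)|$ and $n^\beta|W|$ respectively. For $x \in B$, $d_A(x)$ is a nonnegative integer and $\beta < 1$, so $d_A(x)^\beta \le d_A(x)$; summing gives $\sum_{x \in B} d_A^\beta \le \sum_{x \in B} d_A(x) = |\nabla(A,B)|$. For $x \in W$, I would apply Jensen's inequality (concavity of $t \mapsto t^\beta$) to the uniform probability measure on the $|W|$-point set $W$:
\[
\sum_{x \in W} d_A(x)^\beta \;\le\; |W|^{1-\beta}\Bigl(\sum_{x \in W} d_A(x)\Bigr)^\beta = |W|^{1-\beta}|\nabla(A,W)|^\beta \;\le\; |W|^{1-\beta}(n|W|)^\beta = n^\beta|W|,
\]
using the trivial bound $|\nabla(A,W)| \le n|W|$ (each $w \in W$ has $n$ neighbors in total). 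Combining these two upper bounds with the lower bound from Theorem~\ref{main:iso} immediately yields $|\nabla(A,B)| + n^\beta|W| \ge 2^{n-1}$.

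The one step that requires any real thought is the initial choice of set. If one applies Theorem~\ref{main:iso} directly to $A$, the resulting upper bound on $\sum_{x\in A} h_A^\beta$ involves $\sum_{x \in A} d_W(x)^\beta$, and Jensen over an $|A|$-point sum gives only $|A|^{1-\beta}(n|W|)^\beta$, which is typically much larger than $n^\beta|W|$. Applying the theorem to $A^c = B \cup W$ instead places $W$ on the side of the identity $h_{A^c} = d_A$, so the $\beta$-th moment sum needing control is over $|W|$ terms rather than $|A|$ terms, and Jensen then extracts exactly the factor $n^\beta$ demanded by the corollary.
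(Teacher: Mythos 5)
Your proof is correct and follows essentially the same route as the paper: the paper's proof applies Theorem~\ref{main:iso} to $B\cup W = V\setminus A$ (packaged as Corollary~\ref{CorKPi} with $(R,S,U)=(B,A,W)$), bounds $\int_B h^\beta \le \int_B h = 2^{-n}|\nabla(A,B)|$, and bounds $\int_W h^\beta \le n^\beta\mu(W)$, which is exactly your two-sum decomposition. The only cosmetic difference is that you invoke Jensen on the $W$-sum where the paper just uses the pointwise bound $d_A(x)\le n$; both give $n^\beta|W|$.
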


\subsection{Second application: stability for ``almost'' isoperimetric subsets}
\label{2ndapp}
 A simple 
(though now suboptimal) "stability" statement for edge boundaries says:
\begin{thm}\label{ellis}
For a fixed k, if $|A| = 2^{n-k}$ and $|\nabla A| < (1+\eps) |A|\log_2(2^n/|A|)$, then
there is a subcube $C$ with $\mu(C\Delta A) =O(\eps)$
(where the implied constant depends on $k$).
\end{thm}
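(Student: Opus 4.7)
The plan is induction on $n$ for fixed $k$, reducing a near-extremal $A \sub Q_n$ to a near-extremal subset of $Q_{n-1}$ of the same codimension $k$, then invoking the inductive hypothesis.

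For each coordinate $i \in [n]$, write $A_j^{(i)} = \{x \in A : x_i = j\} \sub Q_{n-1}$ and $\alpha_i = |A_0^{(i)}|/|A|$. The decomposition
\[
|\nabla A| = |\nabla_{Q_{n-1}} A_0^{(i)}| + |\nabla_{Q_{n-1}} A_1^{(i)}| + |\nabla_i A|,
\]
combined with the edge isoperimetric inequality applied to each half, yields after a short calculation
\[
|\nabla A| - k|A| \geq |\nabla_i A| - |A|(1 - H(\alpha_i)),
\]
where $H$ is the binary entropy function. Moreover, for $X$ uniform on $A$ one has $H(X_i) = H(\alpha_i)$ and $H(X_i \mid X_{-i}) = 1 - |\nabla_i A|/|A|$, so the classical $H(X_i) \geq H(X_i \mid X_{-i})$ gives $1 - H(\alpha_i) \leq |\nabla_i A|/|A|$ for every $i$. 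Hence a coordinate with small $|\nabla_i A|$ automatically has $\alpha_i$ near $1/2$.

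Since $\sum_i |\nabla_i A| = |\nabla A| \leq (1+\eps) k|A|$, averaging gives some $i$ with $|\nabla_i A| \leq (1+\eps)k|A|/n$, which is $O(\eps|A|)$ as soon as $n \geq n_0(k,\eps)$. For such $i$, setting $B = A_0^{(i)} \cap A_1^{(i)}$, one has $|A \triangle (B \times \{0,1\})| = |\nabla_i A| = O(\eps|A|)$, $|B|$ within $O(\eps|A|)$ of $2^{n-1-k}$, and---via submodularity $|\nabla(A_0^{(i)} \cap A_1^{(i)})| \leq |\nabla A_0^{(i)}| + |\nabla A_1^{(i)}| - |\nabla(A_0^{(i)} \cup A_1^{(i)})|$ combined with edge isoperimetry on $A_0^{(i)} \cup A_1^{(i)}$---one obtains $|\nabla_{Q_{n-1}} B| \leq (1+O(\eps))k|B|$. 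After a negligible correction to make $|B|$ exactly $2^{n-1-k}$, the inductive hypothesis produces a codimension-$k$ subcube $C' \sub Q_{n-1}$ with $\mu_{Q_{n-1}}(B \triangle C') = O(\eps)$, and $C := C' \times \{0,1\}$ gives the desired codimension-$k$ subcube of $Q_n$ close to $A$.

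The main obstacle is keeping the constant $c_k$ independent of $n$ as the induction unfolds: a naive per-step error accumulation would give a bound growing with $n$. Circumventing this requires picking a pivot $i$ with $|\nabla_i A|$ substantially smaller than $\eps|A|$ (available by averaging once $n \gg k/\eps$), so the per-step approximation errors form a convergent rather than arithmetic series. The small-$n$ regime $n \leq n_0(k)$ is handled by compactness: for each fixed $(n,k)$ the edge isoperimetric inequality is strict on non-subcubes, yielding a uniform positive gap $\delta(n,k) > 0$ below which $A$ must itself be a subcube, and these finitely many base cases are absorbed into $c_k$.
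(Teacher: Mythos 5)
The paper does not prove Theorem~\ref{ellis}: it is cited from Friedgut--Kalai--Naor~\cite{FKN} ($k=1$), Bollob\'as--Leader--Riordan ($k=2,3$) and Ellis~\cite{Ellis} (general $k$), all of which the paper notes are Fourier-analytic. So your proposal is not a reconstruction of an argument in the paper but an attempt at a new, elementary proof. Unfortunately it has a genuine gap at exactly the point you flag as the ``main obstacle,'' and the fix you propose does not work.

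The problem is that the \emph{relative} slack parameter $\eps$ roughly doubles at every inductive step, and choosing a pivot $i$ with small $|\nabla_i A|$ does not prevent this. Concretely, write $a=|A|=2^{n-k}$, $e=|\nabla_i A|$, $B=A_0^{(i)}\cap A_1^{(i)}$, so $|B|=(a-e)/2$. Your submodularity step, combined with edge isoperimetry on $A_0^{(i)}\cup A_1^{(i)}$, gives
\[
|\nabla_{Q_{n-1}}B| \;\le\; k|B| \;+\; \eps k a \;-\; e \;+\; \tfrac{a+e}{2}\log_2\!\left(1+\tfrac{e}{a}\right)
\;\le\; k|B| + \eps k a .
\]
So the \emph{absolute} excess of $|\nabla B|$ over $k|B|$ does not increase. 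But the inductive hypothesis must be invoked with the relative parameter
\[
\eps' \;=\; \frac{|\nabla B|-k|B|}{k|B|} \;\le\; \frac{\eps k a}{k(a-e)/2} \;=\; \frac{2\eps}{1-e/a},
\]
which for $e=O(\eps a)$ (and in particular for your choice $e\le (1+\eps)ka/n$) is $\approx 2\eps$. Halving the set halves the isoperimetric baseline $k|B|$ while the absolute slack is essentially unchanged, so $\eps'\approx 2\eps$ regardless of how small $e$ is; the correction from the $-e+\frac{a+e}{2}\log_2(1+e/a)$ term is only $O(e/(ka))\cdot\eps$ and cannot cancel the factor $2$. Unwinding the recursion down to the base case $n_0(k)$ gives $c_k(n)\gtrsim 2^{\,n-n_0}$, which makes the conclusion $\mu(C\Delta A)\le c_k(n)\eps$ vacuous for any $\eps$ that is not exponentially small in $n$. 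The alternative of choosing $e$ \emph{large} (of order $\eps k a$) does reduce $\eps'$, but then the per-step distance $\mu\bigl(A\Delta(B\times\{0,1\})\bigr)=e/2^n$ is $\Theta(\eps)$, and summing over $\Theta(n)$ steps gives $\Theta(n\eps)$, again too large. So the ``convergent series'' claim rests on a misidentification of the dominant error: it is not the direct distance $e/2^n$ that accumulates, but the degradation of the slack parameter.

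A secondary gap: the phrase ``after a negligible correction to make $|B|$ exactly $2^{n-1-k}$'' is not justified. Adjusting $|B|$ by $\Theta(\eps|B|)$ vertices can change $|\nabla B|$ by $\Theta(\eps n|B|)$, which dwarfs the permitted slack $\eps k|B|$; one would need to argue that the correction can be done with vertices of very high $B$-degree, or (better) formulate the inductive statement for $|A|=(1\pm\eps)2^{n-k}$ in the first place. This is repairable, but the slack-doubling issue above is structural, and I do not see how to close the induction as you have set it up.
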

\nin
This was proved for $k=1$ by
Friedgut, Kalai and Naor \cite{FKN}; then for $k=2,3$
by Bollob\'as, Leader and Riordan, who conjectured the general statement
(see \cite{Ellis}); and finally in full by Ellis \cite{Ellis}.
These all based on Fourier analysis;
e.g.\ at the heart of \cite{Ellis} is Talagrand's extension \cite{TalRusso} of \cite{KKL}.
Even stronger, very recent results of Ellis, Keevash and Lifshitz \cite{EKL} are more elementary but rather involved.

Notice that if $A$ is (sufficiently) close to a codimension $k$ subcube then there 
is an $I\sub [n]$ of size $k$ with $\nabla A\approx \nabla_I A$. 
In fact the implication goes both ways; this follows (more or less) from Theorem~\ref{ellis}, but 
is also easy without that machine: 
\begin{prop}\label{cube.prop}
Assume $|A|= (1\pm \eps)2^{n-k}$ and
\[
|\nabla A \setminus \nabla_I A| \le \eps|A|,
\]
where $I$ is a $k$-subset of $[n]$.  
Then there is a (codimension $k$) subcube $C$ with $|A\Delta C| = O(\eps)|A|$
(where the implied constant depends on $k$). 
\end{prop}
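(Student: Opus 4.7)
Without loss of generality take $I=[k]$, and partition $V$ into its $2^k$ fibers over the first $k$ coordinates: for $z\in\{0,1\}^I$ set $F_z=\{x\in V:x_i=z_i\ \forall i\in I\}$ (a copy of $Q_{n-k}$) and $A_z=A\cap F_z$. The key observation is that edges of $\nabla A\sm\nabla_I A$ all point in a direction outside $I$, so they decompose as the disjoint union over $z$ of the edge-boundaries of $A_z$ \emph{inside} $F_z$. The hypothesis therefore reads
\[ \sum_{z\in\{0,1\}^I} |\nabla_{F_z}A_z|\ \le\ \eps|A|. \]

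Next, I would apply the edge-isoperimetric inequality \eqref{edge.iso} inside each $F_z\cong Q_{n-k}$. Since $\nabla_{F_z}A_z=\nabla_{F_z}(F_z\sm A_z)$, applying \eqref{edge.iso} to whichever of $A_z, F_z\sm A_z$ has size at most $2^{n-k-1}$, and using $\log_2(2^{n-k}/s)\ge 1$ for $s\le 2^{n-k-1}$, gives
\[ |\nabla_{F_z}A_z|\ \ge\ \min\{|A_z|,\,2^{n-k}-|A_z|\}. \]
Summing yields $\sum_z \min\{|A_z|,\,2^{n-k}-|A_z|\}\le\eps|A|$, so every fiber is either nearly full or nearly empty.

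The last step is to argue that essentially a single fiber is full. Let $Z_1=\{z:|A_z|>2^{n-k-1}\}$ and $Z_0=\{0,1\}^I\sm Z_1$. Writing
\[ |A|\ =\ |Z_1|\cdot 2^{n-k}\ -\ \sum_{z\in Z_1}(2^{n-k}-|A_z|)\ +\ \sum_{z\in Z_0}|A_z| \]
and combining with the previous display gives $\bigl||A|-|Z_1|\cdot 2^{n-k}\bigr|\le\eps|A|$. Together with $|A|=(1\pm\eps)2^{n-k}$ this sandwiches the integer $|Z_1|$ between $(1-\eps)^2$ and $(1+\eps)^2$, forcing $|Z_1|=1$ once $\eps<\sqrt2-1$ (for larger $\eps$ the conclusion is trivial, as $|A\Delta F_z|\le|A|+2^{n-k}=O(\eps)|A|$ for any $z$). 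Taking $C=F_{z^*}$ for the unique $z^*\in Z_1$, the quantity $|A\Delta C|=(2^{n-k}-|A_{z^*}|)+\sum_{z\ne z^*}|A_z|$ is exactly the summed minima, so $|A\Delta C|\le\eps|A|$.

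The only subtle step is the integer pigeonhole that forces $|Z_1|=1$; the rest is a direct fiberwise application of the classical edge-isoperimetric inequality, and no new tools are needed beyond \eqref{edge.iso}.
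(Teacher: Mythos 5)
Your proof is correct, and while it shares the paper's opening moves (fiber over $I$, observe $\nabla A\setminus\nabla_I A$ decomposes as the within-fiber boundaries, apply \eqref{edge.iso} in each fiber), it diverges at the crucial conclusion step. The paper keeps the logarithmic factor from \eqref{edge.iso}, rewrites $\sum_z a_z\log_2(2^{n-k}/a_z)$ as $a\bigl[H(\alpha_z)+\log_2(2^{n-k}/a)\bigr]$ with $H$ the entropy of the distribution $(\alpha_z)$, and deduces from $H=O(\eps)$ that one $\alpha_z$ is $1-O(\eps)$ and the rest are $O(\eps/\log(1/\eps))$. You instead throw away the log (using $\log_2(2^{n-k}/s)\ge 1$ for $s\le 2^{n-k-1}$) to get the cleaner bound $\sum_z\min\{|A_z|,\,2^{n-k}-|A_z|\}\le\eps|A|$, and then close with an integer pigeonhole on $|Z_1|$. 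Your route is more elementary (no entropy), and it yields the explicit bound $|A\Delta C|\le\eps|A|$ with constant $1$, independent of $k$; the paper's route pays that simplicity back with a marginally sharper estimate ($O(\eps/\log(1/\eps))$ rather than $O(\eps)$) on the mass in the ``wrong'' fibers, which is not needed for the stated proposition. Both are short and correct; yours is arguably the more transparent of the two.
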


The original motivation for Theorem \ref{main:iso} arose in connection with our
efforts to prove the following statement, which had been conjectured in \cite{IK}.
Here $\mis(G)$ is the number of maximal independent sets in the graph $G$.
\begin{thm}\label{misqn}
$\mis(Q_n)\sim 2n\exp_2[2^{n-2}]$.
\end{thm}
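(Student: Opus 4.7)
The argument splits into an explicit construction for the lower bound and a structural counting argument for the upper bound, with the isoperimetric inequality and its stability refinement driving the latter.

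For the lower bound, the plan is to exhibit $(1-o(1))\,2n\exp_2[2^{n-2}]$ distinct MISs. Fix $i\in[n]$ and $b\in\{0,1\}$ (giving $2n$ choices) and, inside the facet $F=\{x:x_i=b\}\cong Q_{n-1}$, pick the bipartition class $P=F\cap\mathcal{E}$ of size $2^{n-2}$. For each $T\subseteq P$, build an MIS $I_T$ whose even part $A:=I_T\cap\mathcal{E}$ agrees with $T$ inside $F$, filled in on the opposite facet by the forced closure rules $\bar B=\partial A$, $\bar A=\partial B$ that any MIS must satisfy; distinctness across $T$ is immediate from $I_T\cap P=T$.

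For the upper bound, write any MIS as $I=A\cup B$ with $A=I\cap\mathcal{E}$, $B=I\cap\mathcal{O}$, $\bar A=\mathcal{E}\setminus A$, $\bar B=\mathcal{O}\setminus B$, and observe that MIS-ness forces $\partial A=\bar B$ and $\partial B=\bar A$. Set $U=A\cup\bar B$. Since $\nabla(A,B)=\emptyset$, the only cross-edges between $U$ and $\bar U$ go from $\bar B$ into $\bar A$, giving
\[
|\nabla U|=n(2^{n-1}-|I|),\qquad |U|=2^{n-1}+|A|-|B|.
\]
For ``balanced'' MISs with $|A|\approx|B|$ we have $|U|\approx 2^{n-1}$, and the edge-isoperimetric bound forces $|I|\leq(1-1/n)2^{n-1}$; in this regime $|\nabla U|$ is close to the isoperimetric minimum at $|U|=2^{n-1}$. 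The stability refinement of Theorem~\ref{main:iso} referenced in the abstract (Theorem~\ref{thm:main} of the paper) then shows $U$ is very close to some codimension-$1$ subcube $C=\{x:x_i=b\}$, attaching a canonical $(i,b)$ to $I$.

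Given $(i,b)$, the MIS is essentially determined by a subset of $C\cap\mathcal{E}$, a set of size exactly $2^{n-2}$, with the stability ``fuzz'' $U\triangle C$ contributing only $o(2^{n-2})$ to the entropy. An entropy or container-style argument then yields at most $(1+o(1))\exp_2[2^{n-2}]$ MISs per $(i,b)$, and summing over the $2n$ choices gives the desired upper bound. MISs failing the balance or facet-alignment hypothesis --- either with $|A|$ far from $|B|$, or whose $U$ is far from every facet --- are counted separately using Corollary~\ref{cubesep}: such configurations force either $|I|$ well below $2^{n-1}$ or a large waste set $W$ in the partition $(A,B,W)$, and in either case the count is $\exp_2[(1-\Omega(1))2^{n-2}]$, negligible against the main term.

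The main obstacle is converting the geometric stability conclusion ``$U$ close to $C$'' into the \emph{sharp} constant $(1+o(1))\exp_2[2^{n-2}]$, rather than a weaker $\exp_2[(1+o(1))2^{n-2}]$ which would fall short of the theorem. This requires a quantitatively precise stability statement --- with the symmetric difference $\mu(U\triangle C)$ controlled far better than $2^{-n+2}$ --- together with a delicate enumeration of MISs inside the fuzz neighborhood of a fixed facet. The sharp constant $2$ in Theorem~\ref{main:iso}, which is lost if one uses only Theorem~\ref{thm:T}, is exactly what powers this final quantitative step.
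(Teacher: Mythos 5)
The paper does not actually prove this theorem: it says explicitly ``The proof of this is completed in \cite{misQ},'' and what the present paper contributes to that proof is the stability statement Theorem~\ref{thm:main} (with Theorem~\ref{main:iso} behind it). There is therefore no in-paper proof to measure your sketch against; a review can only judge the sketch on its own terms.

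On those terms, the lower-bound construction and the opening reduction (splitting an MIS $I=A\cup B$ into even and odd parts with $\partial A=\bar B$, $\partial B=\bar A$, $\nabla(A,B)=\emptyset$) are sensible, but the core upper-bound argument has two gaps. First, invoking the paper's stability result for the single set $U=A\cup\bar B$ against its complement amounts to taking $W=\emptyset$ in Theorem~\ref{thm:main}, where it collapses to ordinary edge-isoperimetric stability (Theorem~\ref{ellis}, $k=1$). The paper stresses that the novelty of Theorem~\ref{thm:main} lies precisely in the case $W\neq\emptyset$, where ``the full edge boundary of $A$ need \emph{not} be small''; your reduction never reaches that regime, so whatever use \cite{misQ} makes of Theorem~\ref{thm:main}, it is not through this $U$. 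Second, and decisively, the final enumeration --- upgrading ``$U$ is $o(1)$-close to a facet'' to the sharp $(1+o(1))\exp_2[2^{n-2}]$ count per facet, and showing that unbalanced, non-near-maximal, or far-from-facet MISs contribute only $\exp_2[(1-\Omega(1))2^{n-2}]$ --- is asserted rather than carried out, and you flag it yourself as ``the main obstacle.'' That step \emph{is} the theorem. As written, the proposal is a high-level plan, not a proof.
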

\nin
The proof of this is completed in \cite{misQ}.
What it needed from isoperimetry (see \cite{misQ} for the connection)
was a variant of Theorem~\ref{ellis}---really, just of the original result of \cite{FKN}---of 
the following type.
\begin{center}
\emph{If $(A, B, W)$ is a partition of $V$ with $\mu(A),\mu(B)\approx 1/2$ 
(so $W$ is ``small") and $|\nabla(A,B)|\approx 2^{n-1}$, then\\
$\nabla A\approx \nabla_iA$ $~$ for some i.}
\end{center}

\nin
Of course this depends on quantification; e.g.\ it can fail with $\mu(W)$ 
as small as $\Theta(n^{-1/2})$ (let $W$ consist of strings of weight $\lfloor n/2\rfloor$).
Note also that here the full edge boundary of $A$ need \emph{not} be small,
since there is no restriction (beyond $n|W|$) on $|\nabla(A,W)|$.

The following consequence of Theorem \ref{main:iso} is a (limited) statement of
the desired type, the 
case $k=1$ of which
suffices for \cite{misQ}.
(Recall $\beta=\log_2(3/2)$.)

\begin{thm} \label{thm:main} 
For $k\in \{1,2\}$ the following holds.
Suppose $(A, B, W)$ is a partition of $V$ with 
$\mu(A) = (1\pm   \eps)2^{-k}$, 
$\mu(W)\le \eps n^{-\beta}$
and
\beq{nabla} 
|\nabla(A,B)|<(1+\eps)k2^{n-k}.
\enq
Then there is $I\sub [n]$ of size $k$ such that
\beq{conc} 
|\nabla_i A| = (1-O(\eps))2^{n-k} ~~\forall i\in I. 
\enq
Furthermore, there is a codimension $k$ subcube $C$ such that
\beq{conc'}\mu(C\Delta A)=O(\eps).
\enq
\end{thm}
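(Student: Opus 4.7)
The idea is to apply Theorem \ref{main:iso} to $A' := A \cup W$ rather than to $A$: since $V\setminus A' = B$, we have $h_{A'}(x)=d_B(x)$ on $A'$, bringing in exactly the quantity $|\nabla(A,B)|$ that the hypothesis controls. The inequality reads
\[
2^{-n}\sum_{x \in A \cup W} d_B(x)^\beta \;\ge\; 2\mu(A')(1-\mu(A')) \;=\; 2\cdot 2^{-k}(1-2^{-k}) + O(\eps),
\]
using $\mu(A)=(1\pm\eps)2^{-k}$ and $\mu(W) \le \eps n^{-\beta} = o(1)$. On the left, the $W$-contribution is handled crudely: $2^{-n}\sum_{x \in W} d_B(x)^\beta \le n^\beta \mu(W) \le \eps$, which is precisely where the odd-looking hypothesis $\mu(W) \le \eps n^{-\beta}$ earns its keep. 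For the $A$-contribution, the power-mean (Jensen) inequality gives
\[
2^{-n}\sum_{x\in A} d_B(x)^\beta \;\le\; 2^{-n}|A|^{1-\beta}|\nabla(A,B)|^\beta \;\le\; (1+\eps)\, k^\beta\, 2^{-k}.
\]
The whole engine is the arithmetic coincidence $k^\beta = 2(1-2^{-k})$ for $k \in \{1,2\}$ (namely $1 = 2\cdot\tfrac12$ and $2^{\log_2(3/2)} = \tfrac32 = 2\cdot\tfrac34$), which makes this upper bound match the isoperimetric lower bound to within $O(\eps)$; this is also what pins the theorem to $k \le 2$, since already $3^\beta \approx 1.90 \ne \tfrac74 = 2(1-\tfrac18)$.

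Near-equality throughout then lets me extract structure. Standard stability of the power-mean inequality (Taylor-expand $t^\beta$ around $t=k$) gives $\sum_{x\in A}(d_B(x)-k)^2 = O(\eps\, |A|\, k^2)$, and since $d_B$ is integer-valued the set $A_0 := \{x \in A : d_B(x) = k\}$ satisfies $|A\setminus A_0| = O(\eps)|A|$. For each $x\in A_0$ put $S(x) := \{i : x^i \in B\} \in \binom{[n]}{k}$. The key remaining task is to show that some single $I \in \binom{[n]}{k}$ satisfies $S(x) = I$ for $(1-O(\eps))|A|$ of the $x \in A_0$: given this, $|\nabla_i A| \ge |\nabla_i(A,B)| \ge (1-O(\eps))|A|$ and $|\nabla_i A|\le|A|$ for $i\in I$, which is \eqref{conc}. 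For \eqref{conc'} I want to apply Proposition \ref{cube.prop}, so I need $|\nabla A\setminus \nabla_I A| = \sum_{i\notin I}|\nabla_i A| = O(\eps)|A|$. Subtracting $\sum_{i\in I}|\nabla_i A| \ge k(1-O(\eps))|A|$ from $|\nabla A| \le |\nabla(A,B)| + |\nabla(A,W)|$, this reduces to showing $|\nabla(A,W)| = O(\eps)|A|$; I expect to get the latter by a comparison between Theorem \ref{main:iso} applied to $A$ and to $A \cup W$, which together force $d_W(x) = 0$ for all but $O(\eps)|A|$ of the $x \in A_0$.

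The principal obstacle is the $I$-concentration step, upgrading ``$d_B(x) = k$ for most $x$'' to ``$S(x) = I$ for most $x$''. For $k = 1$ this is morally the Friedgut--Kalai--Naor theorem, which I would bring to bear by first replacing $W$ with a deterministic assignment of each $w \in W$ to $A$ or $B$ (chosen, e.g., to minimize boundary changes, costing only $O(\eps)$ error because $\mu(W) \le \eps n^{-\beta}$) and then applying FKN to the resulting clean bipartition. For $k = 2$ I would first identify a single dominant coordinate $i^*$ by a largest-$|\nabla_{i^*}(A,B)|$ argument, condition on the value of $x_{i^*}$, and reduce to the $k=1$ case on an $(n-1)$-cube. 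The delicate point is to make sure the reduction introduces only linear-in-$\eps$ error, rather than a worse polynomial dependence, in the step from $A_0$ to the $A_0$ restricted to $\{x:x_{i^*}=z\}$.
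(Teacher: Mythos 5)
Your opening steps match the paper closely: applying Theorem \ref{main:iso} to $A\cup W$ so that $h_{A\cup W}=d_B$ on its support, discarding the $W$-contribution via $n^\beta\mu(W)\le\eps$, and using (near-tightness in) concavity of $t\mapsto t^\beta$ to force $d_B(x)=k$ for a.e.\ $x\in A$ --- this is exactly the content of Corollary~\ref{CorKPi} and Proposition~\ref{prop:1toB}. Your observation that $k^\beta=2(1-2^{-k})$ holds precisely for $k\in\{1,2\}$ is a pleasant way to see why the theorem is pinned to these $k$ (equivalent to the paper's remark that $f(x)=(4/3)x^\beta$ witnesses $k=2$ in Theorem~\ref{thm:gen}).

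The gap is in the $I$-concentration step, which is the heart of the theorem, and your proposed route does not close it. Reassigning $W$ to $A$ or $B$ and invoking FKN cannot give an $O(\eps)$ conclusion: the reassignment can increase the edge boundary by up to $n|W|\le \eps n^{1-\beta}2^n$, i.e.\ by a $\Theta(\eps n^{1-\beta})$ fraction of $2^{n-k}$, which is unbounded in $n$ since $\beta<1$. FKN applied to the repaired partition would therefore only give stability $O(\eps n^{1-\beta})$. (The paper flags this obstacle explicitly: ``the full edge boundary of $A$ need \emph{not} be small, since there is no restriction on $|\nabla(A,W)|$.'') The same issue defeats your plan for \eqref{conc'}: you want $|\nabla A\setminus\nabla_I A|=O(\eps)|A|$ to feed Proposition~\ref{cube.prop}, but $|\nabla(A,W)|$ need not be $O(\eps)|A|$. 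Concretely, take $A=\{x:x_1=1\}\setminus W$, $B=\{x:x_1=0\}$, with $W$ a sparse antichain of size $\eps n^{-\beta}2^n$ inside $\{x:x_1=1\}$; then $|\nabla(A,W)|\approx n|W|\approx\eps n^{1-\beta}|A|\gg\eps|A|$, while the conclusion of the theorem (with $I=\{1\}$, $C=\{x:x_1=1\}$) still holds. Also, your proposed derivation of $|\nabla(A,W)|=O(\eps)|A|$ by comparing Theorem~\ref{main:iso} on $A$ and on $A\cup W$ cannot work: both applications give lower bounds on $\int h^\beta$, and nothing in the hypotheses upper-bounds $\int h_A^\beta$, so $d_W$ is simply uncontrolled.

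The paper instead finishes entirely without FKN and without controlling $\nabla(A,W)$. A shifting argument (Proposition~\ref{prop:shifting}) makes $A$ increasing and $B$ decreasing while not increasing any $|\nabla_i(A,B)|$; then each $A_i=\{x\in A:x^i\in B\}$ is decreasing in $A$, so the sets $A\setminus A_i$ are increasing in $V$, and Harris' inequality applied to two of them forces one $A_i$ to carry $(1-O(\eps))\mu(A)$. For \eqref{conc'}, the paper reapplies Corollary~\ref{CorKPi} inside each $i$-half of the cube to show $A$ concentrates in one half --- this sidesteps Proposition~\ref{cube.prop} altogether, which is necessary precisely because $|\nabla A\setminus\nabla_I A|$ is \emph{not} $O(\eps)|A|$ in general. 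You would need to replace your FKN reduction and your Proposition~\ref{cube.prop} endgame with arguments of this flavor.
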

\nin

\begin{conj}\label{T1.8conj}
The statement in 
Theorem~\ref{thm:main} holds for all $k\in \mathbb{P}$, even with 
$n^\beta$ replaced by $2^n/\partial(|A|)$.
\end{conj}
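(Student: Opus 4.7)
The plan is to proceed by induction on $k$: handle the base cases $k\in\{1,2\}$ by adapting the proof of Theorem~\ref{thm:main}, and for $k\ge 3$ reduce to Ellis's stability theorem (Theorem~\ref{ellis}) via an augmentation of $A$ by $W$.

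For $k\in\{1,2\}$ the original proof of Theorem~\ref{thm:main} combines the tightness of Theorem~\ref{main:iso} at subcubes of codimension $1$ and $2$ with a Jensen bound $\int d_W^\beta d\mu \le \mu(A)^{1-\beta}(n\mu(W))^\beta$, valid under $\mu(W)\le \eps n^{-\beta}$. Under the relaxed hypothesis $\mu(W)\le \eps\,\partial(|A|)/2^n$ this Jensen estimate is too crude; I would replace it by stratifying $A$ according to $d_W$. Since $|\{x\in A : d_W(x)\ge t\}|\le n|W|/t$ by edge counting, the contribution to $\int d_W^\beta d\mu$ can be summed layer by layer and bounded by $O(\eps\cdot 2^{-k})$. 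Combined with the still-tight isoperimetric inequality and the resulting stability analysis, this forces $A$ to be close in $L^1$ to a codimension $k$ subcube; Proposition~\ref{cube.prop} then delivers \eqref{conc} and \eqref{conc'}.

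For $k\ge 3$, Theorem~\ref{main:iso} has strict slack at codimension $k$ subcubes (the ratio $k^\beta/(2(1-2^{-k}))$ strictly exceeds $1$), so isoperimetry alone can no longer force codimension $k$ structure. Instead, I would pass to the augmented set $A':=A\cup W$ and apply Ellis's theorem there. Since $|\nabla A'|=|\nabla(A,B)|+|\nabla(W,B)|$, provided $|\nabla(W,B)|=O(\eps)\,k2^{n-k}$, Ellis yields a codimension $k$ subcube $C$ with $\mu(C\Delta A')=O(\eps)$; the bound $\mu(W)\le \eps$ and the triangle inequality transfer this to $\mu(C\Delta A)=O(\eps)$, from which \eqref{conc} follows via Proposition~\ref{cube.prop}.

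The main obstacle is bounding $|\nabla(W,B)|$ sharply. The naive bound $n|W|\le n\eps\,\partial(|A|)$ falls short of the required $O(\eps)\,k2^{n-k}$ by a polynomial factor in $n$, and Theorem~\ref{main:iso} applied to $W$ yields only the very weak $\sum_{y\in W}h_W(y)^\beta\gtrsim 2|W|$. Closing this gap likely demands a genuinely new ingredient, such as: (i) a stability version of Theorem~\ref{main:iso} describing the shape of $A$ (and hence constraining the pattern of $W$-attachments) when the inequality is nearly tight; (ii) an inductive argument on $|W|$, peeling off a single $W$-vertex at a time while tracking the bound on $|\nabla(A,B)|$; or (iii) an appeal to the sharper Ellis--Keevash--Lifshitz stability results~\cite{EKL} applied to a carefully chosen modification of $A$. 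Identifying the right replacement for the inadequate $n|W|$ bound is the central challenge.
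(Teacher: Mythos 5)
This statement is an open conjecture; the paper offers no proof of it, and your proposal does not close it either. The decisive admission is in your own final paragraph: for $k\ge 3$ the reduction to Ellis's theorem requires $|\nabla(W,B)|=O(\eps)k2^{n-k}$, you observe that the available bound $n|W|$ misses this by a polynomial factor in $n$, and you list three speculative ways one might proceed. That is a statement of the difficulty, not a proof. (There are also secondary issues with that reduction even granting the missing bound, e.g.\ Theorem~\ref{ellis} as stated needs $|A'|=2^{n-k}$ exactly and its $O(\eps)$ constant depends on $k$, but these are minor next to the central gap.)

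More importantly, the one step you present as actually carried out --- the stratification of $A$ by $d_W$ for $k\in\{1,2\}$ --- does not give what you claim. From $|\{x\in A: d_W(x)\ge t\}|\le \min\{|A|,\, n|W|/t\}$, the layer-cake computation yields $\sum_{x\in A}d_W(x)^\beta = O\bigl(|A|^{1-\beta}(n|W|)^\beta\bigr)$ (the integral $\int \beta t^{\beta-2}\,n|W|\,dt$ is dominated by its lower limit $t\approx n|W|/|A|$), i.e.\ it exactly reproduces the Jensen bound you set out to improve. Under the relaxed hypothesis $\mu(W)\le \eps\,\partial(|A|)/2^n \asymp \eps n^{-1/2}$ this is $\asymp \eps^\beta n^{\beta/2}$, nowhere near the needed $O(\eps\cdot 2^{-k})$; the worst case (all $W$--$A$ edges landing on distinct $A$-vertices) really does contribute $\approx n|W|$ to $\sum d_W(x)^\beta$. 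Indeed the paper's own example ($W$ the middle layer, $\mu(W)=\Theta(n^{-1/2})$) shows the conclusion can fail at essentially this density of $W$, so no argument that uses only $|W|$ and degree counts --- which is all your stratification uses --- can work at the threshold $2^n/\partial(|A|)$; one must exploit how $W$ sits relative to $A$ and $B$. Both halves of the plan therefore rest on unproved (and, in the $k\in\{1,2\}$ case, false as stated) estimates, and the conjecture remains open.
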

\nin
(The implied constant in \eqref{conc} and \eqref{conc'} would necessarily depend on $k$.)

Note Theorem~\ref{thm:main} implies an isoperimetric statement---similar to those in 
Section~\ref{Separating}---of which it is a stability version; namely:

\begin{cor}
For $k\in \{1,2\}$, the assumptions of Theorem~\ref{thm:main} imply 
$|\nabla(A,B)|> (1-O(\eps))k2^{n-k}.$
\end{cor}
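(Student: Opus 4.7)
The plan is to derive the lower bound directly from the structural conclusion~\eqref{conc} of Theorem~\ref{thm:main}. First I would apply that theorem to produce a $k$-set $I\sub[n]$ with $|\nabla_i A|=(1-O(\eps))\,2^{n-k}$ for every $i\in I$. Because each directed edge $(x,x^i)$ has a unique coordinate direction $i$, the sets $\{\nabla_i A\}_{i\in I}$ are pairwise disjoint, so summing gives
\[ |\nabla_I A| \;=\; \sum_{i\in I}|\nabla_i A| \;\ge\; (1-O(\eps))\,k\,2^{n-k}. \]

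The only subtlety is that $\nabla_I A$ counts edges from $A$ into all of $V\sm A = B\cup W$, whereas we need a lower bound on the $B$-portion only. To separate them, I would write $\nabla_I A = \nabla_I(A,B)\,\sqcup\,\nabla_I(A,W)$ and observe that each $w\in W$ contributes at most $|I|=k$ elements to $\nabla_I(A,W)$ (one per coordinate in $I$), so $|\nabla_I(A,W)|\le k|W|$.

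Finally I would invoke the hypothesis $\mu(W)\le \eps n^{-\beta}$: for $k\in\{1,2\}$ and $n$ large, this gives $k|W|\le k\eps\,2^n/n^\beta = O(\eps)\,2^{n-k}/n^\beta$, which is comfortably absorbed into the $O(\eps)$ error. Combining,
\[ |\nabla(A,B)| \;\ge\; |\nabla_I(A,B)| \;=\; |\nabla_I A| - |\nabla_I(A,W)| \;\ge\; (1-O(\eps))\,k\,2^{n-k}, \]
as required. No real obstacle arises here---all the substance lives in Theorem~\ref{thm:main}; the only point to watch is that the edges from $A$ into $W$, though potentially large in total (since $n|W|$ need not be small), contribute at most $k$ per vertex of $W$ once we restrict to the directions in $I$, and the weight bound on $W$ then makes this correction negligible.
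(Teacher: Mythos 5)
Your proof is correct. The paper states the corollary without supplying an explicit argument (it is presented simply as a remark that Theorem~\ref{thm:main} implies it), so there is no official proof to compare against; your derivation via \eqref{conc} is the natural one, and the one step that genuinely requires care---that $\nabla_I A$ also picks up edges from $A$ into $W$, which must be discounted before concluding anything about $\nabla(A,B)$---you handle cleanly by noting each $w\in W$ contributes at most $k$ edges to $\nabla_I(A,W)$ and then invoking $\mu(W)\le \eps n^{-\beta}$, so that $k|W| = O(\eps)2^{n-k}$ and is absorbed. One small remark worth making: for $k=1$ (only) the corollary actually follows directly from Corollary~\ref{CorKPi} applied to $(A,B,W)$ without invoking the structural conclusion of Theorem~\ref{thm:main} at all, since $2\alpha(1-\alpha)\approx 1/2 = k2^{-k}$ there; but for $k=2$ one has $2\cdot\tfrac14\cdot\tfrac34 = 3/8 < 1/2$, so Corollary~\ref{CorKPi} alone is not enough and one really does need to pass through \eqref{conc} as you do. Your unified route handles both cases and is correct.
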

\nin
(And of course similarly for whatever one can establish in the direction of Conjecture~\ref{T1.8conj}.)

Finally, the next observation provides a general approach to proving something like the statement in
Theorem~\ref{thm:main} for other values of $k$.  (Its proof is similar to
the derivation of Theorem~\ref{thm:main} from Theorem~\ref{main:iso}
and is omitted.)

\begin{thm} \label{thm:gen}
Fix $k\in \mathbb{P}$ and suppose there are $f,g:[0,1]\ra\Re^+$ such that (i) 
$g$ is continuous with $g(2^{-k})= k2^{-k}$ and (ii) $f$ is increasing and
strictly concave, with $f(0)=0$, $f(k)=k$ and
\[
\int f(h_A)d\mu \geq g(\mu(A)) ~~\forall A\sub V.
\]
Then the conclusions of Theorem~\ref{thm:main} hold
(with implied constants depending on $f$ and $g$)
for $A,B,W$ as in the theorem, except with the bound on $w$ replaced by $w\leq \eps/f(n)$.
\end{thm}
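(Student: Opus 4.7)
The plan is to mimic the derivation of Theorem~\ref{thm:main} from Theorem~\ref{main:iso}, substituting the hypothesis $\int f(h_A)\,d\mu \geq g(\mu(A))$ for Theorem~\ref{main:iso}, and replacing explicit numerical estimates involving $x^\beta$ and $2\mu(A)(1-\mu(A))$ by qualitative arguments based on continuity of $g$ at $2^{-k}$ and strict concavity of $f$. The normalizations $f(0)=0$, $f(k)=k$ and $g(2^{-k})=k2^{-k}$ are precisely those making a codimension-$k$ subcube $C$ (where $h_C\equiv k$ on $C$ and $\mu(C)=2^{-k}$) attain equality in the hypothesis, matching the role of codimension-$k$ subcubes ($k\in\{1,2\}$) in Theorem~\ref{main:iso}.

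First I would combine the hypothesis with Jensen's inequality to show $\bar h:=|\nabla A|/|A|$ is close to $k$. Since $f$ is concave,
\[
g(\mu(A)) \;\leq\; \int f(h_A)\,d\mu \;\leq\; \mu(A)\,f(\bar h).
\]
Continuity of $g$ and $\mu(A)=(1\pm\eps)2^{-k}$ force $g(\mu(A))/\mu(A)\geq k - o_{f,g}(\eps)$, so $f(\bar h)\geq k - o_{f,g}(\eps)$ and (since $f$ is increasing and $f(k)=k$) $\bar h \geq k - o_{f,g}(\eps)$. For the matching upper bound, splitting $f(h_A)=f(d_B+d_W)\leq f(d_B)+f(d_W)$ (subadditivity, from $f$ concave with $f(0)=0$) and using $f(d_W)\leq d_W f(1)$ (from $f(x)/x$ nonincreasing) bounds the $W$-contribution to $\int f(h_A)\,d\mu$ by $f(1)\,n\mu(W)$; the hypothesis $\mu(W)\leq\eps/f(n)$ is precisely what is needed to absorb this into the $O_{f,g}(\eps)$ budget, after which $\bar h \leq k + O_{f,g}(\eps)$.

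Next, strict concavity of $f$ turns the tightness of Jensen's inequality into concentration of $h_A$: the defect $\mu(A)f(\bar h) - \int f(h_A)\,d\mu$ bounds from above a weighted sum of $(h_A(x)-k)^2$ over $x\in A$ (with an $f$-dependent constant). Because this defect is $O_{f,g}(\eps)$ and $h_A$ is integer-valued, we conclude $h_A(x)=k$ for all but an $O_{f,g}(\eps)$-fraction of $x\in A$, and for most such $x$ all $k$ outside-$A$ neighbors lie in $B$ (since $|\nabla(A,W)|\leq n|W|$ is comparatively small). A pigeonhole/counting argument---the same one used for Theorem~\ref{thm:main}---exploits $\sum_i |\nabla_i(A,B)| = |\nabla(A,B)| \leq (1+\eps)k2^{n-k}$ together with the fact that most $x\in A$ contribute $k$ to this sum via a $k$-subset $J(x)\subseteq[n]$, to show that $J(x)$ equals a single fixed $I$ for all but an $O_{f,g}(\eps)$-fraction of these $x$. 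This delivers \eqref{conc}; then \eqref{conc'} follows from Proposition~\ref{cube.prop} applied with $|\nabla A \setminus \nabla_I A| = O_{f,g}(\eps)|A|$.

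The main obstacle is this final combinatorial extraction of $I$: the pigeonhole argument must be carried out using only the quantitative information produced in the previous steps (concentration of $h_A$ at $k$, and the upper bound on $|\nabla(A,B)|$), without relying on any arithmetic special to $x^\beta$. The earlier steps amount to routine replacements of explicit numerical bounds by their qualitative counterparts, once one fixes the bookkeeping of $f$- and $g$-dependent constants through the continuity and strict-concavity estimates.
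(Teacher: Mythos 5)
Your high-level plan (concentration of $h$ at $k$ via Jensen, then extraction of a single $k$-set $I$, then stability) matches the paper's, but each of the three steps as you've sketched it has a genuine gap that the paper's actual mechanism is designed to avoid.

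\textbf{The $W$-contribution.} You apply the hypothesis to $A$ and then split $f(h_A)\le f(d_B)+f(d_W)$ by subadditivity, bounding $\int_A f(d_W)\,d\mu\le f(1)\,n\,\mu(W)$. With $\mu(W)\le\eps/f(n)$ this gives $f(1)\,\eps\, n/f(n)$, and since $f$ is strictly concave with $f(0)=0$ we have $n/f(n)\to\infty$; so this is not $O(\eps)$. (For $f(x)=x^\beta$ it is $\eps\, n^{1-\beta}$.) The paper avoids this by applying the integral hypothesis to the set $R\cup U = A\cup W$ rather than to $A$---this is exactly Corollary~\ref{CorKPi}. Then $h_{A\cup W}$ restricted to $A$ \emph{is} $d_B = h_{AB}$, so no subadditivity is needed, and the only correction term is $\int_W f(h_{A\cup W})\,d\mu\le f(n)\mu(W)\le\eps$, which has the right size. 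This is why the theorem's bound on $\mu(W)$ is stated in terms of $f(n)$ and not $n$. The same issue infects your upper bound: you also do not use the hypothesis $|\nabla(A,B)|<(1+\eps)k2^{n-k}$, which is what actually caps $\E[h_{AB}\mid A]$ (your $\bar h = |\nabla A|/|A|$ is the \emph{full} edge boundary, which the paper explicitly notes need not be small).

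\textbf{Extracting $I$.} Your ``pigeonhole/counting argument'' does not close. After concentration one has $\sum_i|\nabla_i(A,B)|\approx k|A|$ and a.e.\ $x\in A$ has exactly $k$ indices $i$ with $x^i\in B$; but these facts alone are consistent with the $k$-sets $J(x)$ being spread uniformly over $\binom{[n]}{k}$, in which case no single $I$ dominates. The paper's actual mechanism is a shifting argument (Proposition~\ref{prop:shifting}) making $A$ increasing and $B$ decreasing, after which the sets $A_I=\{x\in A: x^i\in B~\forall i\in I\}$ are decreasing in $A$; then Harris' Inequality (Theorem~\ref{thm:H}) applied to the increasing sets $A\setminus A_{I}$ and $A\setminus\bigcup_{I'\ne I}A_{I'}$ forces the largest $A_I$ to have measure $\mu(A)-O(\eps)$. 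This monotonicity-plus-FKG input is the crux and is not replaceable by pigeonhole.

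\textbf{Stability.} Invoking Proposition~\ref{cube.prop} requires $|\nabla A\setminus\nabla_I A|\le\eps|A|$, but the hypotheses give control only of $\nabla(A,B)$; $|\nabla(A,W)|$ can be as large as $n|W|\le \eps\, n\,2^n/f(n)$, which is typically $\gg\eps|A|$. The paper instead applies Corollary~\ref{CorKPi} separately inside each of the $2^k$ subcubes determined by $I$, which already discounts the $W$-contribution, to show all but one of them meets $A$ in an $O(\eps)$-fraction.

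In short: the correct route is to route everything through Corollary~\ref{CorKPi} (apply the hypothesis to $A\cup W$, not $A$), and to replace your pigeonhole step by the shifting + Harris argument from Section~\ref{sec:main}.
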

\nin
(For the cases covered by Theorem~\ref{thm:main}, 
Theorem~\ref{main:iso} gives the hypothesis of Theorem~\ref{thm:gen}
with $f(x) $ equal to $ x^\beta$ when $k=1$ and $(4/3)x^\beta$ when $k=2$.)

Theorem~\ref{main:iso} is proved in Section \ref{sec:iso}.
Section \ref{sec:main} derives the case $k=1$ of
Theorem \ref{thm:main} and then indicates the small changes needed for $k=2$,
and in passing derives Corollary \ref{cubesep} (see following Corollary~\ref{CorKPi}).
The easy proof of
Proposition~\ref{cube.prop} is given
in Section \ref{sec:cor}.

\section{Proof of Theorem \ref{main:iso}}\label{sec:iso}

\begin{lemma}\label{lem:plus1}
Let $X \subseteq V$ and let $f$ be a real-valued function on $V$. If
\begin{equation}\label{eq:1}
{1 \over \mu(X)} \int_X f^\beta d\mu = T^\beta,
\end{equation}
then
\begin{equation}\label{eq:2}
{1 \over \mu(X)} \int_X (f+1)^\beta d\mu \ge (T+1)^\beta.
\end{equation}
\end{lemma}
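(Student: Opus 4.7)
The plan is to recognize the inequality as a direct application of Jensen's inequality to a single convex function of one variable. Let $\nu := \mu|_X/\mu(X)$ denote the normalized measure on $X$, so \eqref{eq:1} reads $\int f^\beta\,d\nu = T^\beta$ and \eqref{eq:2} becomes $\int (f+1)^\beta\,d\nu \ge (T+1)^\beta$. Define $\phi:[0,\infty)\to[0,\infty)$ by $\phi(u):=(u^{1/\beta}+1)^\beta$; then (for $f\ge 0$, which is the only relevant case since $f^\beta$ appears in the hypothesis) we have $\phi(f^\beta)=(f+1)^\beta$ pointwise and $\phi(T^\beta)=(T+1)^\beta$. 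Consequently the desired \eqref{eq:2} is
\[
\int \phi(f^\beta)\,d\nu \;\ge\; \phi\!\left(\int f^\beta\,d\nu\right),
\]
which is exactly Jensen's inequality, provided $\phi$ is convex on $[0,\infty)$.

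Everything therefore reduces to verifying convexity of $\phi$, which I would do by a direct derivative calculation. Setting $v:=u^{1/\beta}$ and using the chain rule gives
\[
\phi'(u)\;=\;\beta(v+1)^{\beta-1}\cdot\tfrac{1}{\beta}\,v^{1-\beta}\;=\;\left(\frac{v}{v+1}\right)^{1-\beta}.
\]
Since $v$ is increasing in $u$, the ratio $v/(v+1)=1-1/(v+1)$ is increasing in $v$, and the exponent $1-\beta$ is positive (as $\beta<1$), so $\phi'$ is increasing on $(0,\infty)$; hence $\phi$ is convex, and we are done. There is no substantive obstacle here: the only step calling for any verification is the convexity of $\phi$, which is a one-line calculus check, and I expect the authors' proof to look essentially the same.
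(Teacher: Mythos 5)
Your proof is correct and takes essentially the same approach as the paper: both define $g=f^\beta$ (equivalently substitute $u=f^\beta$), observe that $p(u)=(u^{1/\beta}+1)^\beta$ is convex, and apply Jensen's inequality to the normalized measure on $X$. The only difference is that you supply the one-line derivative check of convexity, which the paper simply asserts as "easily seen."
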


\begin{proof} Set $g(x)=f^\beta(x)$ for $x \in X$.  
Then the l.h.s. of (\ref{eq:1}) is $\mathbb Eg$ 
and the l.h.s. of (\ref{eq:2}) is $\mathbb E(g^{1/\beta}+1)^\beta$,
where $\E$ refers to uniform measure on $X$. 
But $p(x):=(x^{1/\beta}+1)^\beta$ is easily seen to be convex; so, by Jensen's inequality,
\[ \mathbb E(g^{1/\beta} +1)^\beta \ge ((\mathbb Eg)^{1/\beta}+1)^\beta, \]

\noindent which implies (\ref{eq:2}). \end{proof}

\mn 

The proof of Theorem \ref{main:iso} proceeds by induction on $n$. (This is also true of Theorem \ref{thm:T}, but beyond this the arguments seem to be different.) It is easy to see that the theorem holds for $n=1$, 
so we suppose $n \ge 2$.

Given $A$, fix an $i \in [n]$. Let
\[V_{0} = \{x \in V : x_i=0\},\]
\[V_{1} = \{x \in V : x_i=1\},\]
\[A_0=A \cap V_0,\]
and
\[A_1=(A \cap V_1)^i=\{x^i : x \in A, x_i=1\} \sub V_{0}.\]

Let $\mu'$ be uniform measure on $V_{0}$. For simplicity, write $h_0$ ($h_1$, $h$, resp.) for $h_{A_0}$ ($h_{A_1}$, $h_{A}$, resp.), a function on $V_{0}$ ($V_{0}$, $V$, resp.).

Let $\mu'(A_0)=a_0$, $\mu'(A_1)=a_1$, and $\mu(A)=a=(a_0+a_1)/2$. Then by induction hypothesis, for $i=0,1$,
\begin{equation}\label{ineq:indhyp}
\int h_i^\beta d\mu' \ge 2 a_i(1-a_i).
\end{equation}

\noindent 
We may assume $a_0 \ge a_1$. Note that
\begin{equation}\label{eq:cases}
h(x)=\begin{cases}h_0(x)+1& \mbox{ if } x \in A_0 \setminus A_1, \\ h_0(x) & \mbox{ if } x \in A_0 \cap A_1, \\ h_1(x^i)+1 &\mbox{ if } x^i \in A_1 \setminus A_0,\\h_1(x^i) &\mbox{ if } x^i \in A_0 \cap A_1;\end{cases}\end{equation}
so 
\beq{claim1}\begin{split}\int h^\beta d\mu &= \int_{A_0} h^\beta d\mu + \int_{(A_1)^i} h^{\beta} d\mu\\
&=\int_{A_0} h^\beta d\mu + \int_{A_1 \setminus A_0} (h_1+1)^\beta d\mu + \int_{A_0 \cap A_1} h_1^\beta d\mu\\
&\ge \int_{A_0} h^\beta d\mu + \int_{A_1} h_1^\beta d\mu\\
& \ge \int_{A_0} h^\beta d\mu +a_1(1-a_1) \end{split} \enq
(the last inequality by (\ref{ineq:indhyp})). Thus the theorem will follow if we show
\beq{stmt1} \int_{A_0} h^\beta d\mu \ge 2a(1-a)-a_1(1-a_1) ~(=a_0+a_1^2-(a_0+a_1)^2/2).\enq

The rest of this section is devoted to the proof of (\ref{stmt1}). Let $Z=\supp(h_0)\setminus A_1$ and $X=\supp(h_0) \cap A_1$ (see Figure \ref{fig1}); thus
\beq{obs} 2 \int_{A_0} h^\beta d\mu = \int_Z (h_0+1)^\beta d\mu' + \int_X h_0^\beta d\mu' + \int_{A_0 \setminus (A_1 \cup Z)} 1 d\mu'.\enq

\begin{figure}[h]
\begin{center}
\begin{tikzpicture}[scale=.8,every node/.style={scale=1}]

\filldraw[fill=black!10!white, draw=black] (1.5,0.1) ellipse (1.45cm and 1.45cm); 

\draw[draw=black] (0,0) ellipse (1.8cm and 1.8cm);

\draw[draw=black, thick] (0.1,0) ellipse (1cm and 1cm);

\node at (-1.65,-1.8) [label=above: {$A_0$}] {};

\node at (-.35,-.35) [label=above: {$Z$}] {};

\node at (.5,-.35) [label=above: {$X$}] {};

\node at (3,-1.3) [label=above: {$A_1$}] {};

\node at (-3,.5) [label=above: {$\supp(h_0)$}] {};

\node (A) at (-2.25,1.1) {};
\node (B) at (-.85,.3) {};

\draw [->] (A) -- (B);

\end{tikzpicture}
\end{center}
\caption{} \label{fig1}
\end{figure}

\begin{obs}\label{obs:a1} We may assume $A_1 \subseteq A_0$.\end{obs}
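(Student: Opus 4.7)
The plan is to reduce to the case $A_1 \sub A_0$ by a single deletion in $V_1$. Set
\[
\tilde A := A \sm \{x^i : x \in A_1 \sm A_0\},
\]
so that $\tilde A_0 = A_0$ and $\tilde A_1 = A_0 \cap A_1 \sub \tilde A_0$; the working assumption $\tilde a_0 \ge \tilde a_1$ also persists, since $\tilde a_1 \le a_1 \le a_0 = \tilde a_0$. The goal is to show that the reduction preserves the LHS of (\ref{stmt1}) while only increasing the RHS, so (\ref{stmt1}) for $\tilde A$ implies (\ref{stmt1}) for $A$.

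The first step is to verify that the LHS of (\ref{stmt1}) is unchanged by this deletion. For $x \in A_0$, the number of $V_0$-neighbors of $x$ outside $A_0 = \tilde A_0$ is obviously unaltered, and the indicator ``$x^i \notin A$'' also agrees with ``$x^i \notin \tilde A$'', because for $x \in A_0$ one has
\[
x^i \in A \iff x \in A_0 \cap A_1 = \tilde A_1 \iff x^i \in \tilde A.
\]
Hence $h_A \equiv h_{\tilde A}$ on $A_0$, so $\int_{A_0} h_A^\beta d\mu = \int_{\tilde A_0} h_{\tilde A}^\beta d\mu$.

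The second step is to show the RHS of (\ref{stmt1}) cannot grow under this deletion. Fixing $a_0$ and letting $t$ play the role of $a_1$, put
\[
R(t) := 2 \cdot \tfrac{a_0+t}{2}\bigl(1 - \tfrac{a_0+t}{2}\bigr) - t(1-t) = a_0 + t^2 - \tfrac{(a_0+t)^2}{2};
\]
a one-line differentiation gives $R'(t) = t - a_0$, which is $\le 0$ on $[0,a_0]$. Since $\tilde a_1 \le a_1 \le a_0$, this yields $R(\tilde a_1) \ge R(a_1)$, i.e.\ $\mathrm{RHS}(\tilde A) \ge \mathrm{RHS}(A)$. Combining, $\mathrm{LHS}(A) = \mathrm{LHS}(\tilde A) \ge \mathrm{RHS}(\tilde A) \ge \mathrm{RHS}(A)$, so it suffices to prove (\ref{stmt1}) for $\tilde A$. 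The only real content is spotting the right deletion (which preserves what $h_A$ sees on $A_0$ while pushing $a_1$ down onto the interval where the relevant one-variable function is monotone); after that, both steps are routine, so I do not anticipate a serious obstacle.
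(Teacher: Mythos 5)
Your proof is correct, and it takes a genuinely different (and arguably slicker) route than the paper's. The paper performs a size-preserving \emph{swap}: if $x\in A_1\setminus A_0$, it replaces $x$ in $A_1$ by some $y\in A_0\setminus A_1$ (possible since $a_0\ge a_1$), which holds $a_0,a_1$ and hence the right side of \eqref{stmt1} fixed while strictly decreasing $\int_{A_0}h^\beta d\mu$; iterating until $A_1\subseteq A_0$ gives the reduction. You instead do a one-shot \emph{deletion} of $(A_1\setminus A_0)^i$ from $A$: this preserves $\int_{A_0}h_A^\beta d\mu$ exactly (since for $x\in A_0$ both $h_0(x)$ and the indicator $[x^i\notin A]$ are unaffected), but it lowers $a_1$, so you additionally need the monotonicity of the right side $R(t)=a_0+t^2-(a_0+t)^2/2$ on $[0,a_0]$, which you correctly verify via $R'(t)=t-a_0\le 0$. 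The trade-off: the paper's swap avoids any analysis of how \eqref{stmt1}'s right side depends on $a_1$ but requires an (easy) iteration/termination argument; your deletion is a single step with no iteration, at the cost of the one-line derivative check. Both reductions are valid, and the surrounding proof of \eqref{stmt1} proceeds identically from either since the modified set satisfies $A_1\subseteq A_0$ and $a_0\ge a_1$.
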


\begin{proof}
If there is $x \in A_1 \setminus A_0$ then we can find $y \in A_0 \setminus A_1$ since $\mu'(A_0) \ge \mu'(A_1)$. Let $B_1= (A_1 \setminus \{x\}) \cup \{y\}$, $B=A_0 \cup (B_1)^i$ and $B_0=B \cap V_0 ~ (=A_0)$. Notice that $|A|=|B|$, $|A_i|=|B_i|$ for $i \in \{0,1\}$, and
\[ \int_{B_0} h_B^\beta d\mu<\int_{A_0} h^\beta d\mu,\]
because: with $Z_B$ (resp. $X_B$) for $\supp(h_{B_0}) \setminus B_1$ (resp. $\supp(h_{B_0}) \cap B_1$), the location of $y$ changes either from $Z$ to $X_B$ or from $A_0 \setminus (A_1 \cup Z)$ to $(B_0 \cap B_1) \setminus X_B$. In either case its contribution to the r.h.s. of (\ref{obs}) shrinks. So if $A_1 \not\subseteq A_0$, then we can shift it to a ``worse'' set. \end{proof}

Let $\gs = \int_Z h_0^\beta d\mu'$, $\gamma = \int_X h_0^\beta d\mu'$, $\alpha=\gs + \gamma$
(=$\int h_0^\gb d\mu'$) and $\mu'(Z)=z$.
Since $A_1\sub A_0$, the r.h.s.\ of \eqref{obs} is
\begin{eqnarray} 
\int_Z(h_0+1)^\beta d\mu' + \gc +\mu'(A_0 \setminus (A_1 \cup Z)) &\ge& (\gs^{1/\beta}+z^{1/\beta})^\beta+\gc+(a_0-a_1-z)\nonumber\\
&=&
((\ga-\gc)^{1/\beta}+z^{1/\beta})^\beta+\gc+(a_0-a_1-z)\nonumber\\
&\geq& (\ga^{1/\beta}+z^{1/\beta})^\beta+(a_0-a_1-z),\label{alphabetaz}
\end{eqnarray}
where the first inequality is given by 
Lemma \ref{lem:plus1} and the second holds because
$((\ga-\gc)^{1/\beta}+z^{1/\beta})^\beta+\gc$ is
increasing in $\gamma$.

So we are done if we show that the expression in \eqref{alphabetaz} 
is at least
\beq{abd} 2(a_0+a_1^2)-(a_0+a_1)^2,\enq
where we are entitled to assume
\beq{alphaind}\alpha  = \int h_0^\beta d\mu' \ge 2a_0(1-a_0).\enq
(see \eqref{ineq:indhyp}) and
\beq{zmin}
z \le \min \{ \alpha, a_0-a_1\}
\enq
(where the second bound holds since $Z \subseteq A_0 \setminus A_1$). 
We consider two cases depending on which of $a_0-a_1$ and the r.h.s. of (\ref{alphaind}) is smaller.

\noindent\textbf{Case 1.} $2a_0(1-a_0) \le a_0-a_1$

Equivalently,
\begin{equation}\label{range2}a_1 \le a_0(2a_0-1).\end{equation}
Also, since $0 \le a_0(2a_0-1)$, we have 
\beq{range3}a_0 \ge 1/2.\enq

Note that \eqref{alphabetaz} is decreasing in $z$ and $z \le \alpha$ by (\ref{zmin}), so recalling that $2^\beta=3/2$ and using (\ref{alphaind}), we find that \eqref{alphabetaz} is at least
\beq{eq:case1} \alpha/2 +a_0-a_1 \ge a_0(1-a_0)+a_0-a_1.\enq

\noindent Subtracting (\ref{abd}) from (\ref{eq:case1}) gives
\[-a_1^2+(2a_0-1)a_1,\]
which is nonnegative since
\[f(x,y):=-y^2+(2x-1)y\ge 0 \quad  \mbox{for $x \in [\frac{1}{2},1]$ and $y \in [0,x(2x-1)]$.}\]
(Because: for any $y \ge 0$, $f(x,y)$ is nondecreasing in $x$, so it is enough to show the inequality holds when $y=x(2x-1)$, in which case $f(x,y)=x(1-x)(2x-1)^2 \ge 0$.)

\medskip
\noindent\textbf{Case 2.} $2a_0(1-a_0) \ge a_0-a_1$

Equivalently,
\beq{range1} a_0(2a_0-1) \le a_1 ~ (\le a_0).\enq

Again using the fact that \eqref{alphabetaz} is decreasing in $z$, now with $z \le a_0-a_1$ by (\ref{zmin}), we find that \eqref{alphabetaz} is at least
\beq{eq:13} (\alpha^{1/\beta}+(a_0-a_1)^{1/\beta})^\beta, \enq
which, in view of (\ref{alphaind}) (and the fact that (\ref{eq:13}) is increasing in $\alpha$), is at least
\beq{eq:case2} ((2a_0(1-a_0))^{1/\beta}+(a_0-a_1)^{1/\beta})^\beta.\enq
Thus the proof that \eqref{alphabetaz} is at least (\ref{abd}) in the present case is completed by the following proposition (applied with $x=a_0$ and $y=a_1$).
\begin{prop}\label{prop:gpos} Let
\[
g(x,y)=((2x(1-x))^{1/\beta}+(x-y)^{1/\beta})^\beta-2(x+y^2)+(x+y)^2.
\]
Then $g(x,y) \ge 0$ for $x,y \in [0,1] $ with $y\in [x(2x-1),x]$.

\end{prop}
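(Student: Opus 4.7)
The plan is to simplify $g$ via a change of variables, then reduce the resulting two-variable inequality to a one-variable statement handled by elementary calculus.

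Set $s = x-y$ and $u = 2x(1-x)$. A short computation gives $-2(x+y^2)+(x+y)^2 = -2x+2x^2-s^2 = -u-s^2$, so
\[
g(x,y) = (u^{1/\beta}+s^{1/\beta})^\beta - u - s^2.
\]
The constraint $y \in [\max(0,x(2x-1)),x]$ translates to $0 \le s \le u \le 1/2$: indeed $u \le 1/2$ always, and if $x \ge 1/2$ then $s \le x-x(2x-1)=u$, while if $x < 1/2$ then $s \le x \le 2x(1-x)=u$. The case $u=0$ is trivial ($s=0$, $g=0$), so suppose $u > 0$ and set $s = ru$ with $r \in [0,1]$. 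Factoring $u^{1/\beta}$ out of the $\beta$-th power gives $(u^{1/\beta}+s^{1/\beta})^\beta = u(1+r^{1/\beta})^\beta$, so $g \ge 0$ becomes $(1+r^{1/\beta})^\beta \ge 1+r^2 u$. Since the right side is increasing in $u$, the worst case is $u = 1/2$, so it suffices to prove
\[
\psi(r) := (1+r^{1/\beta})^\beta - 1 - r^2/2 \ge 0 \qquad \text{for } r \in [0,1].
\]

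Here $\psi(0)=0$ and $\psi(1) = 2^\beta - 3/2 = 0$ (using $2^\beta = 3/2$), so it is enough to show $\psi$ is unimodal on $(0,\infty)$. Differentiating, $\psi'(r) = r[F(r)-1]$ where $F(r) := r^{1/\beta - 2}(1+r^{1/\beta})^{\beta - 1}$. The logarithmic derivative
\[
(\log F)'(r) = \frac{1/\beta-2}{r} + (1-1/\beta)\,\frac{r^{1/\beta-1}}{1+r^{1/\beta}}
\]
is strictly negative on $(0,\infty)$ (both $1/\beta - 2$ and $1-1/\beta$ are negative, since $1/\beta = \log_{3/2}2 \in (1,2)$), so $F$ is strictly decreasing, running from $+\infty$ at $0^+$ to $0$ at $\infty$. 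Hence $F(r)=1$ at a unique $r^* > 0$, $\psi' > 0$ on $(0,r^*)$, and $\psi' < 0$ on $(r^*,\infty)$. Combined with $\psi(0)=\psi(1)=0$, this yields $\psi \ge 0$ on $[0,1]$.

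The main obstacle is really just spotting the change of variables: once one writes things in terms of $s = x-y$ and $u = 2x(1-x)$, the simplification $(x+y)^2-2(x+y^2) = -u - s^2$ and the factorization $(u^{1/\beta}+s^{1/\beta})^\beta = u(1+(s/u)^{1/\beta})^\beta$ cut the problem down to a one-variable exercise, and the rest is routine. The slightly delicate point is that at the upper endpoint $r=1$ (corresponding to $y = x(2x-1)$) the bound is tight precisely when $u = 1/2$, so both reductions---bounding $u$ by $1/2$ and reducing via $s = ru$---must be done simultaneously for the final inequality to be sharp at the correct places.
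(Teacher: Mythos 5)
Your proof is correct, and it takes a genuinely different route from the paper's. The authors work directly in $(x,y)$: they evaluate $g$ at the two endpoints $y=x(2x-1)$ and $y=x$, show $g_y(x,\cdot)$ vanishes at most once in the interior, and show $g(x,y)>0$ for $y$ slightly below $x$; combining these (an at-most-one-critical-point argument with nonnegative endpoint values and one-sided positivity) yields $g\ge 0$. You instead pass to $(u,s)=(2x(1-x),\,x-y)$, observing $-2(x+y^2)+(x+y)^2=-u-s^2$ so that $g=(u^{1/\beta}+s^{1/\beta})^\beta-u-s^2$, and then exploit the degree-one homogeneity of $(u^{1/\beta}+s^{1/\beta})^\beta$: writing $s=ru$ and using $u\le 1/2$, the problem collapses to the single-variable inequality $\psi(r)=(1+r^{1/\beta})^\beta-1-r^2/2\ge 0$ on $[0,1]$, which you settle by a clean unimodality argument (both boundary terms in $(\log F)'$ negative because $1<1/\beta<2$, so $\psi'$ has exactly one sign change, with $\psi(0)=\psi(1)=0$). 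The paper's endpoint identities $g(x,x(2x-1))=x(1-x)(2x-1)^2$ and $g(x,x)=0$ reappear in your reduction as the facts that $\psi(1)=0$ gives equality when $u=1/2$ (i.e.\ $x=1/2$, the only place the corner case is tight) and $\psi(0)=0$ gives equality at $s=0$. Your version makes the structure of the inequality more visible---in particular it isolates where the constant $\beta=\log_2(3/2)$ enters (through $2^\beta=3/2$ giving $\psi(1)=0$, and through $1<1/\beta<2$ giving monotonicity of $F$)---at the cost of having to spot the change of variables; the paper's version avoids that preprocessing but is more ad hoc in its case analysis. Both are complete and correct.
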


\begin{proof} Observe that for $x \in [0,1]$,
\beq{eq:inicond2} g(x,x(2x-1))=x(1-x)(2x-1)^2 \ge 0, \enq
and
\beq{eq:inicond1} g(x,x)=0. \enq
Also, the partial derivative of $g(x,y)$ with respect to $y$ is
\[
g_y(x,y)=-(x-y)^{\frac{1}{\beta}-1}((2x(1-x))^{\frac{1}{\beta}}+(x-y)^{\frac{1}{\beta}})^{\beta-1}+2(x-y).
\]

Now, we claim that
\beq{cl:onezero} \mbox{ for given $x \in [0,1]$, $g_y(x,y)$ is equal to zero for at most one $y \in [x(2x-1),x)$.} \enq
Indeed, let $A=x-y \;(>0)$ and $B=2x(1-x)$. Then
\beq{num}
g_y(x,y)=0 \Leftrightarrow A^{\frac{1}{\beta}}+B^{\frac{1}{\beta}}=2^{\frac{1}{\beta-1}}A^{\frac{2\beta-1}{\beta(\beta-1)}}.
\enq
Notice that $A^{\frac{1}{\beta}}+B^{\frac{1}{\beta}}$ is increasing in $A$ while $2^{\frac{1}{\beta-1}}A^{\frac{2\beta-1}{\beta(\beta-1)}}$ is decreasing in $A$ (since $\frac{2\beta-1}{\beta(\beta-1)} < 0$). So we conclude that for any $B$, (\ref{num}) holds at most once, which is (\ref{cl:onezero}).

Finally, we claim that
\beq{cl:gpos}\mbox{for each $x \in (0,1)$, there is $c=c(x)>0$ such that $g(x,y)>0$ for all $y \in (x-c,x)$.}\enq
Note that Proposition \ref{prop:gpos} follows from the combination of (\ref{eq:inicond2}), (\ref{eq:inicond1}), (\ref{cl:onezero}), and (\ref{cl:gpos}).

\noindent\textit{Proof of (\ref{cl:gpos}).} Given $x \in (0,1)$, for $c=c(x)$ TBA,
\[ g(x,x-c)=((2x(1-x))^{\frac{1}{\beta}}+c^{\frac{1}{\beta}})^\beta +2x^2-2x-c^2,\]
so
\beq{ineq:1} g(x,x-c)>0 \Leftrightarrow ((2x(1-x))^{\frac{1}{\beta}}+c^{\frac{1}{\beta}})^\beta > c^2+2x(1-x).\enq
Now,
\[
((2x(1-x))^{\frac{1}{\beta}}+c^{\frac{1}{\beta}})^\beta = 2x(1-x)\left(1+\left(\frac{c}{2x(1-x)}\right)^{\frac{1}{\beta}}\right)^\beta,
\]
and if $c$ is small enough,
\[\begin{split}
\left(1+\left(\frac{c}{2x(1-x)}\right)^{\frac{1}{\beta}}\right)^\beta & = \exp[\Theta(c^{1/\beta})\beta]\\
&=1+\Theta(c^{1/\beta}),
\end{split}
\]
which implies (\ref{ineq:1}).
\end{proof}

\section{Proof of Theorem~\ref{thm:main}} \label{sec:main}

As noted at the end of Section~\ref{2ndapp}, we prove 
Theorem~\ref{thm:main} for $k=1$ and then indicate what changes for $k=2$.
This seemed to us slightly clearer than proving them together, though the differences are minor.
Extending to Theorem~\ref{thm:gen} is straightforward, though the counterpart of 
Proposition~\ref{prop:1toB} is slightly more painful than the original.

As usual, $A \sub V$ is \textit{increasing} if $x \in A$ and $y \ge x$ (with respect 
to the product order on $V$) imply $y \in A$ (and A is \textit{decreasing} is defined similarly). 
For $x,y$ with $x <y$, we write $x \lessdot y$ if $x \le z \le y$ implies $z \in \{x,y\}$.
We will need Harris' Inequality \cite{Harris}:
\begin{thm} \label{thm:H}
For any product measure $\nu$ on $Q_n$ and increasing $A, B \subseteq V$,
\[\nu(A \cap B) \ge \nu(A)\nu(B).\]
\end{thm}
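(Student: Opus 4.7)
The plan is to prove Theorem~\ref{thm:H} by induction on $n$, the standard route to the product-measure case of FKG. The base case $n=1$ is immediate: the only increasing subsets of $\{0,1\}$ are $\emptyset$, $\{1\}$, and $\{0,1\}$, and in each of the nine pairs one checks $\nu(A\cap B)\ge \nu(A)\nu(B)$ by inspection (the only nontrivial case is $A=B=\{1\}$, which amounts to $\nu(\{1\})\ge \nu(\{1\})^2$).

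For the inductive step ($n\ge 2$), I would factor $\nu=\nu'\otimes \nu_n$, where $\nu'$ is a product measure on $\{0,1\}^{n-1}$, and write $p=\nu_n(\{1\})$. For $S\sub V$ and $\eps\in\{0,1\}$ let
\[
S_\eps=\{y\in\{0,1\}^{n-1}: (y,\eps)\in S\}.
\]
Monotonicity of $A$ gives $A_0\sub A_1$ with both slices increasing in $\{0,1\}^{n-1}$, and similarly for $B$. Conditioning on the last coordinate,
\[
\nu(A\cap B)=(1-p)\,\nu'(A_0\cap B_0)+p\,\nu'(A_1\cap B_1),
\]
so the inductive hypothesis applied to each slice gives $\nu(A\cap B)\ge (1-p)a_0 b_0+p\,a_1 b_1$, where $a_\eps=\nu'(A_\eps)$ and $b_\eps=\nu'(B_\eps)$.

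It remains to compare this with $\nu(A)\nu(B)=[(1-p)a_0+p a_1][(1-p)b_0+p b_1]$. Expanding and simplifying, the difference is exactly
\[
p(1-p)(a_1-a_0)(b_1-b_0),
\]
which is nonnegative because $a_1\ge a_0$ and $b_1\ge b_0$ (again by monotonicity of $A$ and $B$). This closes the induction.

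There is no real obstacle here: the whole argument reduces the global correlation inequality, via one coordinate at a time, to the trivial observation that the product of conditional expectations of two monotone one-dimensional functions is correlated in the sign dictated by their monotonicity. I would present it in essentially the three steps above (base case, slicing/induction, one-line algebraic identity), with the factorization $p(1-p)(a_1-a_0)(b_1-b_0)$ as the whole content of the proof.
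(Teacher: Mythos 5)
Your proof is correct and complete: the slicing $A_\eps=\{y:(y,\eps)\in A\}$, the inductive hypothesis on each slice, and the identity reducing the comparison to $p(1-p)(a_1-a_0)(b_1-b_0)\ge 0$ together constitute the standard and fully rigorous induction proof of Harris's inequality for product measures. Note, however, that the paper does not prove this statement at all---Theorem~\ref{thm:H} is Harris's classical correlation inequality, quoted with a citation to \cite{Harris}---so there is no in-paper argument to compare against; your write-up would simply be supplying the well-known proof that the authors chose to omit.
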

Recall that $h_S$ was defined in (\ref{h}) and, for disjoint $A,B \sub V$, set
\[ h_{AB}(x)=\begin{cases} d_{B}(x) &\mbox{ if } x \in A, \\ 0 &\mbox{ if } x \notin A; \end{cases} \]
thus
\[\int_Ah_{V\sm B}d\mu = \int h_{AB}~ d\mu = {2^{-n}} |\nabla(A,B)|.\]

We need the following easy consequence of Theorem \ref{main:iso}.

\begin{cor} \label{CorKPi} If $(R,S,U)$ is a partition of $V$ with 
$\mu(R\cup U)=\alpha$, then 
\[ 
(2^{-n}|\nabla(R,S)|= \int_Rh_{R\cup U}d\mu \geq)~~\int_R h_{R\cup U}^\beta d\mu \ge 
2 \alpha(1-\alpha)- n^\beta \mu(U).
\]
\end{cor}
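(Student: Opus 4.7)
The plan is to apply Theorem~\ref{main:iso} directly to the set $A:=R\cup U$, noting that on $R$ the function $h_{R\cup U}$ (single-subscript, so this is $h_A$ in the sense of \eqref{h}) agrees exactly with $h_A$, and then to bound the ``cost'' of restricting the integral from $A$ to $R$ by an $n^\beta\mu(U)$ error term.

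Concretely, with $A=R\cup U$ we have $\mu(A)=\alpha$ and $V\setminus A=S$, so Theorem~\ref{main:iso} gives
\[
\int h_A^\beta\, d\mu \;\geq\; 2\alpha(1-\alpha).
\]
Splitting the integral by the partition $A=R\sqcup U$, and using that $h_A(x)=d_S(x)=h_{R\cup U}(x)$ for every $x\in A$, yields
\[
\int_R h_{R\cup U}^\beta\, d\mu \;=\; \int h_A^\beta\, d\mu \;-\; \int_U h_A^\beta\, d\mu.
\]
For $x\in U$ we have the trivial bound $h_A(x)=d_S(x)\leq n$, so $\int_U h_A^\beta\, d\mu\leq n^\beta\mu(U)$. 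Substituting gives the asserted inequality $\int_R h_{R\cup U}^\beta d\mu\geq 2\alpha(1-\alpha)-n^\beta\mu(U)$.

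The parenthetical leftmost equality/inequality chain is handled separately. The identity $\int_R h_{R\cup U}\, d\mu = 2^{-n}|\nabla(R,S)|$ is just the observation that $h_{R\cup U}(x)=d_S(x)$ for $x\in R$, so the integral counts each $(R,S)$-edge once (from its $R$-endpoint). And since $h_{R\cup U}$ takes values in the nonnegative integers and $\beta<1$, we have $t^\beta\leq t$ for every $t\in\{0,1,2,\ldots\}$, giving $\int_R h_{R\cup U}\, d\mu\geq\int_R h_{R\cup U}^\beta\, d\mu$.

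There is essentially no obstacle here — the whole content sits in Theorem~\ref{main:iso}; the corollary is a bookkeeping step that trades the $U$-portion of the integral for the crude $n^\beta\mu(U)$ term, which is exactly what makes it useful in Theorem~\ref{thm:main} where $\mu(U)$ is assumed to be small compared with $n^{-\beta}$.
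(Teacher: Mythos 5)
Your proof is correct and follows exactly the same route as the paper's: apply Theorem~\ref{main:iso} to $A=R\cup U$, split $\int h_A^\beta\,d\mu$ over $R$ and $U$, and bound the $U$-part by $n^\beta\mu(U)$ using $h_A\le n$. The parenthetical chain is also handled correctly (and the paper leaves it implicit), so nothing is missing.
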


\begin{proof}
Theorem~\ref{main:iso} gives
\[
2\alpha(1-\alpha)\le \int h_{R\cup U}^\beta d\mu =
\int_R h_{R\cup U}^\beta d\mu + \int_U h_{R\cup U}^\beta d\mu \le \int_R h_{R\cup U}^\beta d\mu +n^\beta\mu(U),
\]
and the corollary follows.
\end{proof}

In particular, taking $(R,S,U) =(B,A,W)$ gives Corollary~\ref{cubesep}.\qed

We now assume the situation of Theorem \ref{thm:main}.
Note that each of $\mu(A)$, $ \mu(B) $ is $1/2\pm O(\eps)$.
In what follows we (abusively) use ``a.e.'' to mean ``all but an $O(\eps)$-fraction,'' so for example write ``a.e. $x \in A$ satisfies $Q$'' for ``$Q$ holds for all but an $O(\eps)$-fraction of the members of $A$.''

\begin{prop} \label{prop:1toB}
For a.e. $x \in A$, $h_{AB}(x)=1$.
\end{prop}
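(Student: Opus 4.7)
The plan is to combine an upper bound on $\int h_{AB}\, d\mu$ coming from the edge-count hypothesis~\eqref{nabla} with a nearly matching lower bound on $\int h_{AB}^\beta\, d\mu$ coming from Corollary~\ref{CorKPi}, and then exploit the strict gap between $j$ and $j^\beta$ for integers $j\ge 2$ to force a.e.\ $x\in A$ to satisfy $h_{AB}(x)=1$.

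First I would record the two bounds. Hypothesis~\eqref{nabla} gives $\int h_{AB}\, d\mu = 2^{-n}|\nabla(A,B)| \le (1+\eps)/2$. For the other direction I apply Corollary~\ref{CorKPi} to the triple $(R,S,U)=(A,B,W)$, noting that $h_{R\cup U}$ restricted to $R$ counts neighbors in $V\setminus(A\cup W)=B$ and therefore coincides with $h_{AB}$, while $\alpha=\mu(A)+\mu(W)=1/2\pm O(\eps)$. The hypothesis $\mu(W)\le \eps n^{-\beta}$ then kills the $n^\beta\mu(W)$ error term, yielding $\int h_{AB}^\beta\, d\mu \ge 2\alpha(1-\alpha)-n^\beta\mu(W)\ge 1/2-O(\eps)$.

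Next I would slice $A$ as $A_0\sqcup A_1\sqcup A_{\ge 2}$ according to the value of $h_{AB}$, writing $a_j=\mu(A_j)$ and setting $m=\int_{A_{\ge 2}}h_{AB}\,d\mu$, $M=\int_{A_{\ge 2}}h_{AB}^\beta\,d\mu$. The numerical observation driving the argument is that for every integer $j\ge 2$ one has $j^\beta\le j\cdot 2^{\beta-1}=(3/4)j$, hence $M\le (3/4)m$. Substituting into the two estimates above gives $a_1+m\le 1/2+O(\eps)$ and $a_1+M\ge 1/2-O(\eps)$; subtracting and using $M\le (3/4)m$ forces $m/4\le O(\eps)$, whence $m=O(\eps)$, $a_{\ge 2}\le m/2=O(\eps)$, and $a_1=1/2-O(\eps)$.

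Finally, since $\mu(A)\le 1/2+O(\eps)$, the identity $a_0+a_{\ge 2}=\mu(A)-a_1=O(\eps)$ is exactly the desired claim. No step is really hard; the one conceptual choice is recognizing that Corollary~\ref{CorKPi} should be applied with $W$ absorbed into the ``inside'' set $R\cup U=A\cup W$ (rather than treated as part of the ``outside''), so that the left-hand side is precisely the $\beta$-moment of $h_{AB}$ that one wants to bound, and the error $n^\beta\mu(W)$ is tuned to exactly match the hypothesis on $\mu(W)$.
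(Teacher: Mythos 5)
Your proposal is correct and follows essentially the same route as the paper: apply Corollary~\ref{CorKPi} to $(R,S,U)=(A,B,W)$ to get a lower bound on $\int h_{AB}^\beta\,d\mu$, compare with the upper bound on $\int h_{AB}\,d\mu$ from \eqref{nabla}, and use the $j$ versus $j^\beta$ gap to force $h_{AB}\in\{0,1\}$ a.e. In fact your write-up is a bit more complete than the paper's, which explicitly concludes only $h_{AB}(x)\in\{0,1\}$ a.e.\ and leaves the final step (that $a_0=O(\eps)$, ruling out the value $0$) implicit; your accounting via $a_1=1/2-O(\eps)$ and $\mu(A)\le 1/2+O(\eps)$ supplies that step cleanly.
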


\begin{proof}
Applying Corollary~\ref{CorKPi} with $(R,S,U)=(A,B,W)$ (and using \eqref{nabla}) gives
\beq{1212}
(1+\eps)/2 \ge \int h_{AB} d\mu =\int_A h_{A\cup W}d\mu 
\ge \int_A h^\beta_{A\cup W} d\mu 
= 1/2 -O(\eps).
\enq
In particular, $\int (h_{AB}-h_{AB}^\beta)d\mu=O(\eps)$, which, since 
$\int(h_{AB}-h_{AB}^\beta)d\mu=\gO\left(\mu(\{x\in A: h_{AB}(x)\not\in\{0,1\})\right)$,
implies $h_{AB}(x) \in \{0,1\}$ for a.e.\ $x \in A$. \end{proof}

The next observation will allow us to assume that $A$ is increasing and $B$ is decreasing.

\begin{prop}\label{prop:shifting}
For any partition $(A, B, W)$ of $V$ there is another partition $(A', B', W')$ satisfying:

\begin{enumerate}
\item $\mu(X)=\mu(X')$ for $X \in \{A,B,W\}$;

\item $A'$ is increasing and $B'$ is decreasing;

\item $|\nabla_i(A,B)|\ge |\nabla_i(A',B')|$ for all $i \in [n]$.
\end{enumerate}
\end{prop}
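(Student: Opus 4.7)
The plan is to produce $(A', B', W')$ from $(A, B, W)$ by iterated coordinate compressions, one per direction. For each $i\in[n]$ I define the \emph{$i$-compression} $S_i$ to act independently on each $i$-fiber $\{x, x^i\}$ (with $x_i=0$): the multiset of labels from $\{A,B,W\}$ on the two vertices is preserved, but re-sorted so that $A$ (if present) lands on the top vertex $x^i$ and $B$ (if present) lands on the bottom $x$. So the three mixed fibers are re-sorted as $\{A,B\}\mapsto(B,A)$, $\{A,W\}\mapsto(W,A)$, $\{B,W\}\mapsto(B,W)$ (bottom, top), and same-label fibers are left alone. This immediately gives property (1) (label counts are preserved) and the invariance of $|\nabla_i(A,B)|$ under $S_i$, since the multiset of labels on each $i$-fiber, hence the count of $\{A,B\}$-fibers, is preserved.

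For $j\neq i$ I would prove the key inequality $|\nabla_j(S_iA, S_iB)|\le|\nabla_j(A,B)|$ by a two-dimensional square argument. Fix a square $\{x, x^i, x^j, x^{ij}\}$ with $x_i=x_j=0$ and compare the number of $\{A,B\}$-pairs among its two $j$-edges $(x, x^j)$ and $(x^i, x^{ij})$, before and after simultaneously re-sorting the two $i$-fibers $\{x, x^i\}$ and $\{x^j, x^{ij}\}$ by the rule above. Summing over all squares in the $(i,j)$-plane then yields the inequality, and this together with paragraph one gives property (3).

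Finally I iterate $S_1, S_2,\ldots, S_n, S_1, \ldots$; termination is secured by the potential $\Phi(A, B, W):=\sum_{x\in A}|x|-\sum_{x\in B}|x|$, with $|x|$ the Hamming weight of $x$. Any non-trivial re-sort of a fiber strictly increases $\Phi$ (by $+2$ in the $\{A,B\}$ case and $+1$ in each of the $\{A,W\}$ and $\{B,W\}$ cases), and $\Phi$ is bounded, so the process reaches some $(A', B', W')$ invariant under every $S_i$. Invariance under $S_i$ for all $i$ prohibits any pair $x\in A'$, $x^i\notin A'$, $x_i=0$ (and symmetrically for $B'$), so $A'$ is increasing and $B'$ is decreasing, giving (2). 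The main obstacle is the square inequality of paragraph two: one must verify by case analysis (on the roughly $6\times 6$ label-pair configurations, collapsed by symmetry) that independently sorting the two $i$-fibers of a square never creates more $A$-$B$ edges among its two $j$-edges. The three-label setting makes this slightly larger than the classical two-set shifting check, but each case is elementary.
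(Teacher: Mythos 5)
Your proof is correct and is essentially the same argument as the paper's: iterated coordinate shifts ($i$-compressions) that re-sort each $i$-fiber so $A$ moves up and $B$ moves down, the potential $\sum_{x\in A}|x|-\sum_{x\in B}|x|$ to guarantee termination, and a two-dimensional ``square'' case check to show that each shift does not increase $|\nabla_j(A,B)|$ for $j\neq i$ (and leaves $|\nabla_i(A,B)|$ fixed). Both you and the paper leave the square case-analysis at the ``routine to check'' level, so there is no substantive difference.
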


\begin{proof}
This is a typical ``shifting" argument and we will be brief.
For $i\in [n]$, the $i$-\emph{shift} of a partition $(A,B,W)$ is defined thus:
let
\[V_{0} = \{x \in V : x_i=0\}, ~~  V_{1} = \{x \in V : x_i=1\},\] 
and for each $x \in V_0$ with $(x,x^i) \in (A,B), (A,W), \mbox{ or } (W,B)$, 
switch the affiliations of $x$ and $x^i$. 
This trivially does not change $|\nabla_i(A,B)|$, and it's easy to see that it 
does not increase $|\nabla_j(A,B)|$ for $j \in [n] \setminus \{i\}$.
(Consider the contribution to $\nabla_j(A,B)$ of any quadruple 
$\{x, x^i, x^j, (x^i)^j\}$.)

It is also clear that no sequence of nontrivial shifts 
can cycle (e.g.\ since any such shift strictly increases
$\sum_{x\in A}|x|-\sum_{x\in B}|x|$);
so there is a sequence that arrives at an $(A',B',W')$ 
stable under $i$-shifts (for all $i$), and this meets the requirements of the proposition.
\end{proof}

\begin{proof}[Proof of Theorem~\ref{thm:main}]

We first show there is an $i$ as in \eqref{conc}.
By Proposition \ref{prop:shifting}, we may assume $A$ is increasing and $B$ is decreasing. For each $i \in [n]$, let $A_i=\{x \in A: x^i \in B\}$, and notice that
\beq{lem:decr}\mbox{$A_i$ is a decreasing subset of $A$.}\enq
Indeed, given $x \in A_i$, consider any $y \in A$ satisfying $y \lessdot x$. Then $y^i \in B$ since $x^i \in B$ and $B$ is decreasing, so $y \in A_i$. 

By proposition \ref{prop:1toB}, 
\beq{oneA}\mbox{a.e. $x \in A$ is in exactly one $A_i$;}\enq
in particular, if we let $A_0=\{x \in A : d_B(x)=0\}$, then $\mu(A_0)=O(\eps)$.

Setting $\max \mu(A_i)=\mu(A)-\delta$, we just need to show that $\delta = O(\eps)$.

Assume (w.l.o.g.) that $\max \mu(A_i)=\mu(A_1)$, and let $\tilde A=\cup_{i \ne 1} A_i$, 
$C_1=A \setminus A_1$, and $\tilde C=A \setminus \tilde A$. By (\ref{oneA}),
\beq{tC}
\mu(\tilde C)\ge \mu(A_1)-O(\eps),
\enq
while $C_1 \cap \tilde C = A_0$ implies
\[   
\mu(C_1 \cap \tilde C)=O(\eps).
\]    
Moreover, \eqref{lem:decr} and the fact that $A$ is increasing imply that 
$C_1$ and $\tilde C$ are increasing (in $V$); so Theorem \ref{thm:H} gives
\beq{eq:Harris}
O(\eps)= \mu(C_1 \cap \tilde C) \ge \mu(C_1)\mu(\tilde C) \ge \delta(\mu(A)-\delta-O(\eps)),
\enq
whence
\[ 
\delta = O(\eps) ~\mbox{ or }~ \mu(A)-\delta-O(\eps)=O(\eps).
\]
But $\delta=O(\eps)$ is what we want, so we may assume for a contradiction that $\mu(A)-\delta-O(\eps)=O(\eps)$; equivalently, $\mu(A_1) = O(\eps)$. In this case, 
$\mu(A_i) = O(\eps)$ for all $i $, so there is a partition $[n]=I \cup J$ such that each of 
$A_I$ ($:=\cup_{i \in I} A_i$) and $A_J$ has measure $\mu(A)/2 + O(\eps)$. 
But then, setting $C_I=A \setminus A_I$ and $C_J=A \setminus A_J$, and again using  
Theorem~\ref{thm:H}, we have
\[ O(\eps) = \mu(C_I \cap C_J) \ge \mu(C_I)\mu(C_J) \ge \mu^2(A)/4-O(\eps),\]
which is impossible. \end{proof}

For \eqref{conc'}, let $i$ be as above and for $\pi \in \{0,1\}$, let $C(i,\pi)=\{v:v_i=\pi\}$. 
If $D$ is one of these subcubes then with $|A \cap D|=\delta 2^{n-1}$, 
Corollary~\ref{CorKPi} (applied in $D$ with $R=A \cap D$ and $U=W \cap D$) gives at least $[2\delta(1-\delta)-O(\eps)]2^{n-1}$ edges in $\nabla(A,B) \setminus \nabla_iA$, which with \eqref{nabla} and \eqref{conc} forces $\delta$ to be either $O(\eps)$ or $1-O(\eps)$.
So exactly one, say $C$, has $\delta = 1-O(\eps)$, and this $C$ satisfies 
\eqref{conc'}.\qed

\nin
\emph{Changes for $k=2$ (briefly).}
The only changes are to Proposition~\ref{prop:1toB} and the final argument(s).
For the former, the statement is now:
\[
\mbox{for a.e. $x \in A$, $~h_{AB}(x)=2$.}
\]
Set $f(x)= (4/3)x^\beta$.
Theorem~\ref{main:iso} gives 
$\int f(h_{A\cup W})d\mu \geq 1/2-O(\eps)$, leading to 
\[
\int f(h_{AB})d\mu \geq 1/2 -O(\eps).
\]

Now let $X(x) =h_{AB}(x)$ for $x\in A$ and write $\E$ for expectation w.r.t.\ 
uniform measure on $A$.  Our assumptions on $\mu(A)$ and $|\nabla(A,B)|$ give
\[
\E X = \frac{1}{\mu(A)}\int h_{AB}d\mu = \frac{|\nabla(A,B)|}{\mu(A)2^n} \leq 2+O(\eps),
\]
so, using the concavity of $f$, we have
\[
\int f(h_{AB})d\mu = \mu(A)\E f(X) \leq \mu(A)f(\E X) \leq 1/2+O(\eps).
\]
It's then easy to see (if somewhat annoying to write) that concavity of $f$, with 
$\E f(X)-f(\E X) =O(\eps)$ and $f(\E X) = 2\pm O(\eps)$ (and $X\in \mathbb Z$) implies,
first, that there is a $c$ such that $f(x)=c$ for a.e.\ $x\in A$, and, second, that $c=2$.

For the step leading to \eqref{conc}
we may as well think of a general $k$.  Thus we assume
$A$ and $B$ are increasing and decreasing (resp.), with $n^\beta\mu(W)\leq \eps$,
$\mu(A) = (1\pm \eps)2^{-k}$, $|\nabla(A,B)| < (1+\eps)k2^{n-k}$,
and $h_{AB}(x)=k$ for a.e.\ $x\in A$, 
and want to show
\[
\mbox{there is $I\sub [n]$ of size $k$ such that
$|\nabla_i A| \ge (1-O(\eps)) 2^{n-k}~\forall i\in I.$}
\]
Here for each $k$-subset $I$ of $[n]$ we set
\[
A_I=\{ x \in A : x^{i}\in B~\forall i\in I\}.
\] 
Each $A_I$ is decreasing in $A$ and a.e.\ $x\in A$ is in exactly one $A_I$.
We then assume $\max_I\mu(A_I) = \mu(A_{[k]}) = \mu(A)-\delta$
and continue essentially as before.

The step yielding \eqref{conc'} again takes no extra effort for general $k$:
here we have $2^k$ subcubes corresponding to the members of $\{0,1\}^k$,
and Corollary~\ref{CorKPi} (with \eqref{nabla} and \eqref{conc}) shows that all but
one of these meet $A$ in sets of size $O(\eps)2^{n-k}$ (and the one that doesn't is 
the promised $C$).

\section{Proof of Proposition~\ref{cube.prop}}\label{sec:cor}

Let $|A|=a$.
For $z\in \{0,1\}^I$ let $V_z=\{x:x_i=z_i~\forall i\in I\}$,
$A_z=A\cap V_z$, $a_z=|A_z|$ and $\ga_z=a_z/a$.
Assume (w.l.o.g.) that $a_z$ is maximum when $z=\underline 0$.
We have

\[\begin{split}\eps a & \ge |\nabla A\sm \nabla_I A| 
= \sum_z|\nabla (A_z, V_z \setminus A_z)|
\ge \sum_z a_z\log_2(2^{n-k}/a_z)\\
&  
=a\left[H(\ga_z:z\in \{0,1\}^I)+     \log_2(2^{n-k}/a)\right]
=a H(\ga_z:z\in \{0,1\}^I)+     O(\eps)a, 
\end{split}\]
where $H$ is binary entropy and 
the inequality is given by (\ref{edge.iso}). 
It follows that each $\ga_z$ is either $O(\eps/\log(1/\eps))$ or $1-O(\eps)$ ; so in fact 
$\ga_{\underline{0}}= 1-O(\eps/\log (1/\eps))$ 
and $V_{\underline{0}}$ is the promised subcube.

\end{document}